\DeclareMathAlphabet{\mathpzc}{OT1}{pzc}{m}{it}
\numberwithin{equation}{section}
\def\eqnarray{\stepcounter{equation}\let\@currentlabel=\theequation
\global\@eqnswtrue
\tabskip\@centering\let\\=\@eqncr
$$\halign to \displaywidth\bgroup\hfil\global\@eqcnt\z@
  $\displaystyle\tabskip\z@{##}$&\global\@eqcnt\@ne
  \hfil$\displaystyle{{}##{}}$\hfil
  &\global\@eqcnt\tw@ $\displaystyle{##}$\hfil
  \tabskip\@centering&\llap{##}\tabskip\z@\cr}
\def\endeqnarray{\@@eqncr\egroup
      \global\advance\c@equation\m@ne$$\global\@ignoretrue}
\newtheorem{theorem}{Theorem}[section]
\newtheorem{definition}[theorem]{Definition}
\newtheorem{lemma}[theorem]{Lemma}
\newtheorem{proposition}[theorem]{Proposition}
\newtheorem{assumption}[theorem]{Assumption}
\newtheorem{remark}[theorem]{Remark}
\numberwithin{equation}{section}
\def\Omc{\mathbb{R}^N\setminus\Omega}
\def\Omb{\mathbb{R}^N\setminus\overline{\Omega}}
\def\RR{{\mathbb{R}}}
\def\N{{\mathbb{N}}}
\def\Om{\Omega}
\def\pOm{\partial\Omega}
\def\RRext{\RR \cup \{+\infty\}}
\title{Exterior Nonlocal Variational Inequalities}
	\author{Harbir Antil}
\address{Harbir Antil, Department of Mathematical Sciences and the Center for Mathematics and Artificial Intelligence (CMAI), George Mason University,  Fairfax, VA 22030, USA.}
\email{hantil@gmu.edu}
\author{Madeline O. Horton}
\address{Madeline O. Horton, Department of Mathematical Sciences and the Center for Mathematics and Artificial Intelligence (CMAI), George Mason University,  Fairfax, VA 22030, USA.}
\email{mhorton5@gmu.edu}
\author{Mahamadi Warma}
\address{Mahamadi Warma, Department of Mathematical Sciences and the Center for Mathematics and Artificial Intelligence (CMAI), George Mason University,  Fairfax, VA 22030, USA.}
\email{mwarma@gmu.edu}
\thanks{This work is partially supported by NSF grant DMS-2110263, Air Force Office of Scientific Research under Award NO: FA9550-22-1-0248, Army Research Office (ARO) under Award NO: W911NF-20-1-0115, and Office of Naval Research (ONR) under Award NO: N00014-24-1-2147.}
\keywords{Variational inequality, external obstacle problem, equivalent formulation, penalization, and approximation.}
\subjclass[2020]{35R11, 35J87, 35Q93, 35R35}
\begin{document}

\begin{abstract}
This paper introduces a new class of variational inequalities where the obstacle 
is placed in the exterior domain that  
is disjoint from the observation domain. This is carried out with the help of 
nonlocal fractional operators. The need for such novel variational inequalities 
stems from the fact that the classical approach only allows placing the obstacle 
either inside the observation domain or on the boundary. A complete analysis 
of the continuous problem is provided. Additionally, perturbation arguments to 
approximate the problem are discussed. 
\end{abstract}

\maketitle

\section{Introduction}

Let $\Om \subset \RR^N$, $N \ge 1$, be a bounded open set with boundary $\pOm$. 
Moreover, let $\Sigma_1$, $\Sigma_2$ be open subsets of $\RR^N\setminus\overline{\Om}$
such that $\Sigma_1 \cap \Sigma_2 = \emptyset$ and $\overline\Sigma_1 \cup 
\overline\Sigma_2 = \Omc$. 
In this paper we introduce and study the following variational problem: 
Given 
$f \in W^{-s,2}(\Om,\Sigma_1)$, $z \in W^{s,2}(\Sigma_1)$, 
$\varphi \in W^{s,2}(\Sigma_2)$, we want to solve the following minimization problem:
    \begin{align}\label{eq:min_prob}
        \min_{u \in \mathcal{K}} J(u) 
            := \frac{C_{N,s}}{4}
               \int\int_{\RR^{2N}\setminus(\RR^N\setminus\Om)^2} &
               \frac{|u(x)-u(y)|^2}{|x-y|^{N+2s}} \;dxdy \notag\\
               &- \langle f , u \rangle_{W^{-s,2}(\Om,\Sigma_1),W^{s,2}(\Om,\Sigma_1)}   
    \end{align}
where $0<s<1$ is a real number, $\RR^{2N}\setminus(\RR^N\setminus\Om)^2
= (\Om\times\Om)\cup(\Om\times(\RR^N\setminus\Om))\cup((\RR^N\setminus\Om)\times\Om)$    
and the set of constraints is given by
 \[
    \mathcal{K} 
    := \{ u \in W^{s,2}(\RR^N) \, : \, u = z \mbox{ in } 
            \Sigma_1, \  u \le \varphi \mbox{ in } 
            \Sigma_2 \}.
 \]
Notice that $\mathcal K$ is a closed and convex subset of $W^{s,2}(\RR^N)$.    
The precise definition of the Sobolev spaces involved  will be given in Section~\ref{s:not}.

Obstacle and equilibrium problems, in general, have a rich history. They can capture many applications from phase changes to friction. Though in all cases either the obstacle is placed in the interior of $\Om$ or on the boundary $\pOm$. We refer to the monographs \cite{SBartels_2015a,DKindelehrer_GStampacchia_1980,rodrigues1987obstacle}. 

The main novelty of this paper is the introduction of the model \eqref{eq:min_prob}
which due to the presence of the nonlocal (fractional) Laplacian, enables placement of the obstacle $\varphi$
in the exterior of $\Omega$ and possibly disjoint from the boundary $\pOm$. After establishing existence of solutions (using standard arguments) to \eqref{eq:min_prob}, our first main result is given in Proposition~\ref{prop:equiv} which shows the equivalence between the variational problem \eqref{eq:min_prob} and three other characterizations:
\begin{itemize}
\item[(i)] variational inequality; 
\item[(ii)] slack variable (Lagrange multiplier) formulation, and;
\item[(iii)] weak formulation in the distributional sense.
\end{itemize}
Notice that similar results in the classical setting are well-known. Due to the nonlocal nature of the fractional Laplacian, the existing results do not directly extend to the fractional setting. Indeed, for instance, one has to carefully account for the nonlocal normal derivative. Tools from convex analysis such as tangent cone, convex indicator function are also employed to establish these results. Some of these arguments may appear to be standard, but the details are delicate due to the problem being nonlocal.

Our second main result corresponds to penalization of the constraints in the set $\mathcal{K}$ firstly in $L^2$-sense $\epsilon^{-2}\| (u-\varphi)^+ \|^2_{L^2(\Sigma_2)}$ (cf.~\eqref{eq:PL2Sig2}) and secondly in Sobolev 
sense $\xi^{-1}\| (u-\varphi)^+ \|_{W^{s,2}(\Sigma_2)}^2$ (cf.~\eqref{eq413}). Here $v^+ = \max\{v,0\}$. The former penalization is associated to the so-called Moreau-Yosida regularization. The latter has the distinct advantage of being able to provide a direct relationship between the slack variable for the original problem (ii) and it's penalized version. After establishing convergence using Mosco convergence arguments in Proposition~\ref{prop:mosco}, we establish a convergence rate in $\epsilon$ in Theorem~\ref{thm:penal}. We show that the penalized solution converges linearly in $\epsilon$ in $W^{s,2}(\Omega,\Sigma_1)$-norm. Moreover the constraint violation converges quadratically in $\epsilon$. Theorem~\ref{thm:zeta} provides linear convergence in $\xi$ for both the solution $u$ and the slack variable. 

Going forward, several of the techniques developed here will be helpful for the local problems and also in deriving finite element approximations. Notice that a popular way to numerically tackle these problems is to solve the penalized problems. Then the final approximation errors are governed by $\epsilon$ (or $\xi$) and the discretization errors. This will be part of a future investigation.
 
For completeness, we also mention that fractional obstacle problems where the obstacle is in the interior have also received a significant attention recently. See for instance \cite{BBarrios_AFigalli_XRos-Oton_2018a, LACaffarelli_SSalsa_LSilvestre_2008a,X_Ros-Oton_2018a}. However, as pointed out above, this is the first work that proposes to tackle the exterior obstacle problem.

\medskip
\noindent
{\bf Outline:}
The rest of the paper is organized as follows: In Section~\ref{s:not} we
first introduce some notations and state some preliminary results. Our main 
work starts from Section~\ref{s:VI_wellPosed}, where we establish several
equivalent formulations to the variational problem \eqref{eq:min_prob}. 
In Section \ref{Penalization} we consider two penalty approaches. In the 
first case we consider $L^2$-penalty and in the second case we consider 
a penalty in a Sobolev norm. Convergence and the precise rate of convergence with 
respect to the penalty parameters are established.

\section{Notation and Preliminaries}\label{s:not}

We begin this section by introducing some notations and give some preliminary results as they are needed throughout the paper.
Some of the results given in this section are well-known, in particular we follow
the notations from our previous works 
\cite{HAntil_RKhatri_MWarma_2018a, warma2018approximate}. 

\subsection{Fractional order Sobloev spaces and the fractional Laplacian}
For $\Om \subset\RR^N$ ($N \ge 1$) an arbitrary open set and $0 < s < 1$, we first define the classical Sobolev-Slobodecki space 
 \[
    W^{s,2}(\Om) := \left\{ u \in L^2(\Om) \;:\; 
            \int_\Om\int_\Om \frac{|u(x)-u(y)|^2}{|x-y|^{N+2s}}\;dxdy < \infty \right\} ,
 \]
and we endow it with the norm defined by

 \[
    \|u\|_{W^{s,2}(\Om)} := \left(\int_\Om |u|^2\;dx 
        + \int_\Om\int_\Om  \frac{|u(x)-u(y)|^2}{|x-y|^{N+2s}}\;dxdy \right)^{\frac12}.      
 \] 

Now, assume that the open set $\Omega$ is bounded. We also define the space 
 \[
    W^{s,2}(\Om,\Sigma_1) := \left\{ u \in W^{s,2}(\RR^N) \;:\; u = 0 \mbox{ in } 
            \Sigma_1 \right\} 
 \]     
which is a Hilbert space endowed with the norm induced by $W^{s,2}(\RR^N)$. 

Next, for
$u \in W^{s,2}(\Om,\Sigma_1)$, we let 
    \begin{equation}\label{NORM}
        \|u\|_{W^{s,2}(\Om,\Sigma_1)}
        = \left(\int\int_{\RR^{2N}\setminus(\RR^N\setminus\Om)^2} 
         \frac{|u(x)-u(y)|^2}{|x-y|^{N+2s}} \;dxdy\right)^{1/2}
    \end{equation}
where we recall that
$$\RR^{2N}\setminus(\RR^N\setminus\Om)^2
= (\Om\times\Om)\cup(\Om\times(\RR^N\setminus\Om))\cup((\RR^N\setminus\Om)\times\Om).$$

The following result is contained  in \cite[Proposition~5]{BAbdellaoui_ADieb_EValdinoci_2018a}.

    \begin{proposition}\label{prop:Gag}
        The norm $\|\cdot\|_{W^{s,2}(\Om,\Sigma_1)}$ given in \eqref{NORM} is equivalent to the one induced by 
        $W^{s,2}(\RR^N)$.  As a consequence, 
        $(W^{s,2}(\Om,\Sigma_1),\|\cdot\|_{W^{s,2}(\Omega,\Sigma_1)})$
        is a Hilbert space with the scalar product 
        \[
            (u,v)_{W^{s,2}(\Om,\Sigma_1)} 
            = \int\int_{\RR^{2N}\setminus(\RR^N\setminus\Om)^2} 
             \frac{(u(x)-u(y))(v(x)-v(y))}{|x-y|^{N+2s}} \;dxdy . 
        \]
    \end{proposition}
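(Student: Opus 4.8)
The plan is to establish the two-sided norm equivalence by comparing the Gagliardo seminorm over the ``cross'' region $\RR^{2N}\setminus(\RR^N\setminus\Om)^2$ with the full seminorm over $\RR^{2N}$, on the subspace of functions vanishing on $\Sigma_1$. One inequality is immediate: since the integration domain in \eqref{NORM} is contained in $\RR^{2N}$ and the integrand is nonnegative, we have $\|u\|_{W^{s,2}(\Om,\Sigma_1)} \le [u]_{W^{s,2}(\RR^N)} \le \|u\|_{W^{s,2}(\RR^N)}$ for every $u \in W^{s,2}(\Om,\Sigma_1)$. So the content is the reverse estimate: control the full seminorm, and also the $L^2(\RR^N)$ norm, by the cross-region seminorm alone.

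First I would handle the missing piece of the Gagliardo seminorm, namely the integral over $(\RR^N\setminus\Om)\times(\RR^N\setminus\Om)$. Writing $\Omc = \RR^N\setminus\Om$, for $x,y \in \Omc$ one uses the triangle inequality $|u(x)-u(y)| \le |u(x)-u(z)| + |u(z)-u(y)|$ with $z$ ranging over $\Om$, and integrates in $z$ over a fixed bounded piece of $\Om$ (here boundedness of $\Om$ is used); combined with the fact that $u$ vanishes on $\Sigma_1$ and $\overline\Sigma_1\cup\overline\Sigma_2 = \Omc$, this lets one bound $\int\int_{\Omc\times\Omc}\frac{|u(x)-u(y)|^2}{|x-y|^{N+2s}}\,dxdy$ by a constant times the cross-region seminorm. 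The role of the hypothesis $u = 0$ on $\Sigma_1$ is to prevent the escape of mass to infinity on the complement; without it the claim is false (constants are a counterexample). Next I would bound $\|u\|_{L^2(\Om)}$ by the cross-region seminorm via a fractional Poincaré-type inequality on the bounded set $\Om$ (again exploiting that $u$ vanishes on the set $\Sigma_1$ of positive measure exterior to $\Om$), and then bound $\|u\|_{L^2(\Sigma_2)}$ similarly, using $u=0$ on $\Sigma_1$ and the finite-measure or bounded-diameter structure where available. Since these are exactly the steps carried out in \cite[Proposition~5]{BAbdellaoui_ADieb_EValdinoci_2018a}, I would simply invoke that reference for the quantitative constants rather than reproving them.

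Once the norm equivalence is in hand, the Hilbert space structure follows formally: $(\cdot,\cdot)_{W^{s,2}(\Om,\Sigma_1)}$ as displayed is manifestly bilinear and symmetric, it is positive definite because $(u,u)_{W^{s,2}(\Om,\Sigma_1)} = \|u\|_{W^{s,2}(\Om,\Sigma_1)}^2$ which by the equivalence vanishes only when $u=0$ in $W^{s,2}(\RR^N)$, and completeness transfers from the known completeness of $(W^{s,2}(\Om,\Sigma_1),\|\cdot\|_{W^{s,2}(\RR^N)})$ because equivalent norms share Cauchy sequences and limits.

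The main obstacle is the estimate on the $\Omc\times\Omc$ contribution: one must insert an intermediate point in $\Om$, split the resulting double integral correctly, and check that the weight $|x-y|^{-N-2s}$ is compatible with the bounds $|x-z|^{-N-2s}$ and $|z-y|^{-N-2s}$ after integrating the auxiliary variable over a bounded subset of $\Om$ — this is where boundedness of $\Om$ and the decomposition $\overline\Sigma_1\cup\overline\Sigma_2=\Omc$ with $u=0$ on $\Sigma_1$ are essential. Since all of this is precisely the content of the cited proposition, in the write-up I would keep the argument brief and defer the technical constants to \cite{BAbdellaoui_ADieb_EValdinoci_2018a}.
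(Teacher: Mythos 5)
The paper itself gives no proof of this proposition: it is imported verbatim by citing \cite[Proposition~5]{BAbdellaoui_ADieb_EValdinoci_2018a}, which is exactly where you also land, so your proposal takes the same route. Your added gloss (the easy one-sided bound, triangle inequality through a point of the bounded set $\Om$ together with $u=0$ on $\Sigma_1$ to treat the $(\RR^N\setminus\Om)^2$ block, a fractional Poincar\'e bound for the $L^2$ part, and transfer of completeness under equivalent norms) is a sensible sketch of how such an equivalence is established, and since you correctly defer the quantitative details to the cited reference the proposal is consistent with the paper.
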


We denote the dual spaces of $W^{s,2}(\RR^N)$ and $W^{s,2}(\Om,\Sigma_1)$ 
by $W^{-s,2}(\RR^N)$ and $W^{-s,2}(\Om,\Sigma_1)$, respectively. 
Moreover, we will use $\langle\cdot,\cdot\rangle$, to denote their duality pairing 
whenever it is clear from the context. 

Next, we introduce the fractional Laplace operator. 
For $0<s<1$ we set 
\begin{equation*}
\mathbb{L}_s^{1}(\RR^N):=\left\{u:\RR^N\rightarrow
\mathbb{R}\;\mbox{
measurable, }\;\int_{\RR^N}\frac{|u(x)|}{(1+|x|)^{N+2s}}%
\;dx<\infty \right\} ,
\end{equation*}%
and for $u\in \mathbb{L}_s^{1}(\RR^N)$ and $\varepsilon >0$, we let
\begin{equation*}
(-\Delta )_{\varepsilon }^{s}u(x):=C_{N,s}\int_{\{y\in \RR^N,|y-x|>\varepsilon \}}
\frac{u(x)-u(y)}{|x-y|^{N+2s}}dy,\;\;x\in\RR^N,
\end{equation*}%
where the normalized constant $C_{N,s}$ is given by
\begin{equation}\label{CN}
C_{N,s}:=\frac{s2^{2s}\Gamma\left(\frac{2s+N}{2}\right)}{\pi^{\frac
N2}\Gamma(1-s)},
\end{equation}%
and $\Gamma $ is the usual Euler Gamma function (see, e.g. \cite%
{BCF,Caf3,Caf1,Caf2,NPV,War-DN1,War}). Then,  the fractional Laplacian 
$(-\Delta )^{s}$ is defined for $u\in \mathbb{L}_s^{1}(\RR^N)$  by the formula
\begin{align}
(-\Delta )^{s}u(x)=C_{N,s}\mbox{P.V.}\int_{\RR^N}\frac{u(x)-u(y)}{|x-y|^{N+2s}}dy 
=\lim_{\varepsilon \downarrow 0}(-\Delta )_{\varepsilon
}^{s}u(x),\;\;x\in\RR^N,\label{eq11}
\end{align}%
provided that the limit exists for a.e. $x\in\RR^N$. We refer to \cite{NPV} and the references therein for the class of functions for which the limit in \eqref{eq11} exists.

It has been shown in \cite[Proposition 2.2]{BPS} that for $u,v \in \mathcal{D}(\Om)$ (the space of all continuously infinitely differentiable functions with compact support in $\Omega$), we have 
that 
 \[
    \lim_{s\uparrow 1}\int_{\RR^N} v (-\Delta)^su\;dx 
        = - \int_{\RR^N} v \Delta u\;dx 
        = - \int_{\Om} v \Delta u\;dx=\int_{\Omega}\nabla u\cdot\nabla v\;dx.
 \]
This is where the constant $C_{N,s}$  given in \eqref{CN} plays a crucial role.

Next, we introduce the realization in $L^2(\Omega)$ of the operator $(-\Delta)^s$ with the mixed zero Dirichlet exterior condition in $\Sigma_1$ and zero nonlocal Neumann exterior condition in $\Sigma_2$. For this, consider the continuous, closed and coercive bilinear form $\mathcal E: W^{s,2}(\Om,\Sigma_1)\times  W^{s,2}(\Om,\Sigma_1)\to\RR$ given by
\begin{equation}\label{eq-BF}
\mathcal E(u,v):= \frac{C_{N,s}}{2} \int\int_{\RR^{2N}\setminus(\RR^N\setminus\Om)^2} 
         \frac{(u(x)-u(y))(v(x)-v(y))}{|x-y|^{N+2s}} \;dxdy,\;\; u,v\in W^{s,2}(\Om,\Sigma_1).
\end{equation}
Let $(-\Delta)_{\Sigma_1}^s$ be the self-adjoint operator in $L^2(\Omega)$ associated with $\mathcal E$ in the following sense:
\begin{equation}\label{def-op}
\begin{cases}
D((-\Delta)_{\Sigma_1}^s):=\left\{u\in  W^{s,2}(\Om,\Sigma_1),\; \exists f\in L^2(\Omega):\; \mathcal E(u,v)=(f,v)_{L^2(\Omega)}\;\forall v\in  W^{s,2}(\Om,\Sigma_1)\right\},\\
(-\Delta)_{\Sigma_1}^su=f\;\mbox{ in }\;\Omega.
\end{cases}
\end{equation}

\begin{remark}\label{rem-den}
{\em Since $C_c^\infty(\RR^N\setminus\Sigma_1)\subset D((-\Delta)_{\Sigma_1}^s)$, it is straightforward to show that $D((-\Delta)_{\Sigma_1}^s)$ is dense in $L^2(\Omega)$ and in $ W^{s,2}(\Om,\Sigma_1)$. 
}
\end{remark}

In the forthcoming discussion, we also make use of the local fractional order Sobolev space 
 \begin{equation}\label{eq:Ws2loc}
    W^{s,2}_{\rm loc}(\Omc) := \left\{ u \in L^2(\Omc) \;:\; u\varphi \in W^{s,2}(\Omc), 
         \ \forall \ \varphi \in \mathcal{D}(\Omc) \right\}.  
 \end{equation}

Furthermore, for $u \in W^{s,2}(\RR^N)$, using the terminology from \cite{SDipierro_XRosOton_EValdinoci_2017a}, we define the nonlocal normal derivative (or interaction operator) $\mathcal{N}_s$ as follows:
 \begin{align}\label{NLND}
    \mathcal{N}_s u(x) := C_{N,s} \int_\Om \frac{u(x)-u(y)}{|x-y|^{N+2s}}\;dy, 
            \quad x \in \RR^N \setminus \overline\Om . 
 \end{align}
Clearly $\mathcal{N}_s$ is a nonlocal operator and it is well defined on $W^{s,2}(\RR^N)$
as we discuss next (see e.g. \cite{warma2018approximate} for more details).

 \begin{lemma}\label{lem:Nmap}
  The interaction operator $\mathcal{N}_s$ maps continuously $W^{s,2}(\RR^N)$ into
 $ W^{s,2}_{\rm loc}(\RR^N\setminus\Om)$. 
  As a result, if  $u \in W^{s,2}(\RR^N)$, then $\mathcal{N}_s u \in L^2(\RR^N\setminus\Om)$.
 \end{lemma}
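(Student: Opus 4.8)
The plan is to show two things: first, that $\mathcal{N}_s u$, for $u \in W^{s,2}(\RR^N)$, belongs to $W^{s,2}_{\loc}(\Omc)$ with continuous dependence on $u$; and second, as an immediate consequence for bounded $\Omega$, that $\mathcal{N}_s u \in L^2(\Omc)$. For the first part, I would fix an arbitrary test function $\varphi \in \mathcal{D}(\Omc)$ and estimate $\|\varphi\,\mathcal{N}_s u\|_{W^{s,2}(\Omc)}$ — equivalently the $L^2$ norm plus the Gagliardo seminorm — by a constant (depending on $\varphi$, $N$, $s$, and $\mathrm{dist}(\mathrm{supp}\,\varphi,\pOm)$) times $\|u\|_{W^{s,2}(\RR^N)}$. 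The key point is that on $\mathrm{supp}\,\varphi$ one has a positive distance $\delta$ to $\Omega$, so the kernel $|x-y|^{-(N+2s)}$ appearing in \eqref{NLND} is bounded and smooth in $x$ on that support; this decouples the singularity from the integration in $x$.

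First I would bound the $L^2$ norm: for $x \in \mathrm{supp}\,\varphi$, write $\mathcal{N}_s u(x) = C_{N,s}\int_\Om \frac{u(x)-u(y)}{|x-y|^{N+2s}}\,dy$ and split as $u(x)\,C_{N,s}\int_\Om |x-y|^{-(N+2s)}\,dy - C_{N,s}\int_\Om \frac{u(y)}{|x-y|^{N+2s}}\,dy$. Since $|x-y|\ge\delta$ on this region and $\Omega$ is bounded, the coefficient $\int_\Om|x-y|^{-(N+2s)}\,dy$ is bounded, and by Cauchy–Schwarz the second term is controlled by $\|u\|_{L^2(\Om)}$ times a constant; thus $\|\mathcal{N}_s u\|_{L^2(\mathrm{supp}\,\varphi)} \lesssim \|u\|_{W^{s,2}(\RR^N)}$. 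For the Gagliardo seminorm of $\varphi\,\mathcal{N}_s u$ I would use the product rule for finite differences, $(\varphi\mathcal{N}_su)(x)-(\varphi\mathcal{N}_su)(x')=\varphi(x)(\mathcal{N}_su(x)-\mathcal{N}_su(x'))+(\varphi(x)-\varphi(x'))\mathcal{N}_su(x')$, and treat the two resulting double integrals separately: the second uses the Lipschitz bound on $\varphi$ (and compact support) together with the $L^2$ estimate just obtained, while the first requires estimating $\mathcal{N}_su(x)-\mathcal{N}_su(x')$. For that difference, inside $\Om$ one has $\mathcal{N}_su(x)-\mathcal{N}_su(x') = C_{N,s}\int_\Om \big[(u(x)-u(y))(|x-y|^{-(N+2s)}-|x'-y|^{-(N+2s)}) + (u(x)-u(x'))|x'-y|^{-(N+2s)}\big]\,dy$; the kernel is smooth away from $\pOm$, so $|x-y|^{-(N+2s)}-|x'-y|^{-(N+2s)}\lesssim |x-x'|$ uniformly, and combining with Cauchy–Schwarz (using that $u\in L^2$, $u-u(y)$ integrates against the Gagliardo kernel) yields a bound whose double integral against $|x-x'|^{-(N+2s)}$ over $\mathrm{supp}\,\varphi$ converges because the integrand effectively gains two powers of $|x-x'|$ while we only lose $N+2s<N+2$.

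The second assertion is then immediate: when $\Omega$ is bounded, $\Omc = \RR^N\setminus\Om$ and one can either cover a neighborhood of $\pOm$ by finitely many balls on which the local estimate gives $L^2$ control, while on the region far from $\Om$ the kernel decay $|x-y|^{-(N+2s)}$ with $|x-y|\to\infty$ combined with $u\in L^2(\Om)$, $|\Om|<\infty$, gives $|\mathcal{N}_su(x)| \lesssim (1+|x|)^{-(N+2s)}\|u\|_{L^2(\Om)}$, which is square-integrable over $\Omc$ since $2(N+2s)>N$. Alternatively one invokes the first part with a cutoff equal to $1$ near $\pOm$ and handles the exterior tail directly. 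The main obstacle I anticipate is the careful bookkeeping in the Gagliardo seminorm estimate — in particular verifying that the double integral $\iint_{\mathrm{supp}\,\varphi \times \mathrm{supp}\,\varphi} \frac{|x-x'|^2}{|x-x'|^{N+2s}}\,(\cdots)\,dx\,dx'$ is finite and keeping the dependence on $\mathrm{dist}(\mathrm{supp}\,\varphi, \pOm)$ explicit so that continuity of $\mathcal{N}_s$ as a map into $W^{s,2}_{\loc}(\Omc)$ (with its natural family of seminorms) is genuinely established, not just boundedness for each fixed $\varphi$. This is routine but delicate, exactly in the spirit the authors flag in the introduction; I would cite \cite{warma2018approximate} and \cite{SDipierro_XRosOton_EValdinoci_2017a} for the precise computations rather than reproduce them in full.
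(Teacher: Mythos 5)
Your sketch of the first assertion is the natural one and essentially correct: fix $\varphi\in\mathcal D(\Omc)$, exploit $\delta:=\mathrm{dist}(\mathrm{supp}\,\varphi,\overline\Omega)>0$ to desingularize the kernel $|x-y|^{-(N+2s)}$, bound the $L^2$ norm by Cauchy--Schwarz, and treat the Gagliardo seminorm of $\varphi\,\mathcal N_s u$ via the discrete Leibniz rule. One imprecision worth flagging: in the decomposition of $\mathcal N_su(x)-\mathcal N_su(x')$, only the kernel-difference term $\int_\Omega(u(x)-u(y))\bigl(|x-y|^{-(N+2s)}-|x'-y|^{-(N+2s)}\bigr)dy$ gains a factor of $|x-x'|$ from the mean value theorem; the term $\int_\Omega(u(x)-u(x'))|x'-y|^{-(N+2s)}\,dy$ does not, and is instead absorbed directly by the Gagliardo seminorm of $u$. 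Your phrase ``the integrand effectively gains two powers of $|x-x'|$'' applies only to the first piece, so the two contributions are controlled by genuinely different mechanisms; the computation goes through, but not for the reason you state.

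The second assertion is where your argument has a real gap, not a bookkeeping one. You propose to ``cover a neighborhood of $\partial\Omega$ by finitely many balls on which the local estimate gives $L^2$ control,'' or alternatively to ``invoke the first part with a cutoff equal to $1$ near $\partial\Omega$.'' Neither is available: $\mathcal D(\Omc)$ consists of functions compactly supported in the \emph{open} set $\RR^N\setminus\overline\Omega$, hence vanishing near $\partial\Omega$, and your constant depends on $\delta=\mathrm{dist}(\mathrm{supp}\,\varphi,\partial\Omega)$ and blows up as $\delta\downarrow 0$. The local regularity in part one therefore says nothing about integrability of $\mathcal N_su$ up to $\partial\Omega$. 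And this is not recoverable by more care: taking $u=\mathbf 1_\Omega$, which lies in $W^{s,2}(\RR^N)$ for every $s<1/2$, one computes
\begin{equation*}
\mathcal N_su(x)=-C_{N,s}\int_\Omega|x-y|^{-(N+2s)}\,dy\;\sim\; -c\,\mathrm{dist}(x,\Omega)^{-2s}\quad\text{as } \mathrm{dist}(x,\Omega)\downarrow 0,
\end{equation*}
which fails to be square-integrable near $\partial\Omega$ when $s\ge 1/4$. So $\mathcal N_su\in L^2(\Omc)$ cannot follow from $u\in W^{s,2}(\RR^N)$ and the local statement alone; one needs extra hypotheses on $u$ (such as $(-\Delta)^su\in L^2(\Omega)$, as in Proposition~\ref{prop:prop}) or a duality reading of the pairing $\int_{\Omc}v\,\mathcal N_su\,dx$. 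The ``as a result'' step you treat as immediate is precisely where the content lies, and you should trace the exact hypotheses in \cite{warma2018approximate} rather than assert the implication.
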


Despite the fact that $\mathcal{N}_s$ is defined on $\RR^N \setminus \overline{\Om}$, it
is still known as the ``normal" derivative. This is due to its similarity with 
the classical normal derivative, that is, it plays the same role for $(-\Delta)^s$ that the normal derivative does for the Laplace operator $-\Delta$ (see e.g. \cite[Proposition~2.2]{HAntil_RKhatri_MWarma_2018a}). 
Following the terminology from \cite{HAntil_RKhatri_MWarma_2018a}, we shall call 
$\mathcal N_s$ the {\em interaction operator} since it allows interaction 
between $\Omega$ and the exterior domain $\Omb$. 

We conclude this subsection by stating the integration by parts formulas for the fractional Laplacian, see \cite[Lemma 3.3]{SDipierro_XRosOton_EValdinoci_2017a} for smooth functions and 
\cite[Proposition 2.2]{HAntil_RKhatri_MWarma_2018a} for functions in Sobolev spaces (by using some density arguments). 
 \begin{proposition}[\bf The integration by parts formula I]
 \label{prop:prop}
 Let $u \in W^{s,2}(\RR^N)$ be such that $(-\Delta)^su \in L^2(\Omega)$. 
 Then,  for every $v\in W^{s,2}(\RR^N)$ we have that,
      \begin{align}\label{Int-Part1}
       \frac{C_{N,s}}{2} 
        \int\int_{\RR^{2N}\setminus(\RR^N\setminus\Om)^2} 
         \frac{(u(x)-u(y))(v(x)-v(y))}{|x-y|^{N+2s}} \;dxdy 
        = \int_\Om v(-\Delta)^s u\;dx + \int_{\Omc} v\mathcal{N}_s u\;dx.
      \end{align}     
 \end{proposition}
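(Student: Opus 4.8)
The plan is to prove the integration by parts formula first for smooth, compactly supported functions (where everything is absolutely convergent and can be manipulated freely), and then pass to the general case by density. For the smooth case, take $u,v\in C_c^\infty(\RR^N)$ and start from the right-hand side. Using Fubini and the definitions \eqref{eq11} of $(-\Delta)^s$ on $\Om$ and \eqref{NLND} of $\mathcal N_s$ on $\Omc$, write
\[
\int_\Om v(x)(-\Delta)^su(x)\,dx + \int_{\Omc} v(x)\mathcal N_su(x)\,dx
= C_{N,s}\int_{\RR^N}\!\!\int_{\RR^N} \mathbf 1_{E}(x,y)\,\frac{v(x)(u(x)-u(y))}{|x-y|^{N+2s}}\,dy\,dx,
\]
where, on $\Om$, the inner integral is over all of $\RR^N$ (principal value), and on $\Omc$ it is only over $\Om$; in both cases the pair $(x,y)$ ranges over the set $E:=\RR^{2N}\setminus(\Omc)^2 = (\Om\times\Om)\cup(\Om\times\Omc)\cup(\Omc\times\Om)$, which is symmetric in $x\leftrightarrow y$. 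Then symmetrize: swap the roles of $x$ and $y$ in the integral, use that $E$ and the kernel $|x-y|^{-N-2s}$ are symmetric, average the two expressions, and the product $v(x)(u(x)-u(y))$ becomes $\tfrac12(v(x)-v(y))(u(x)-u(y))$. This yields exactly the left-hand side of \eqref{Int-Part1}, up to the factor $C_{N,s}/2$. Care is needed near the diagonal: the principal-value piece over $\Om\times\Om$ must be handled by first integrating over $\{|x-y|>\varepsilon\}$ and letting $\varepsilon\downarrow0$, which is legitimate since $u$ is smooth (the singularity is integrable after symmetrization, as $|u(x)-u(y)|\lesssim|x-y|$).

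For the general case, suppose $u\in W^{s,2}(\RR^N)$ with $(-\Delta)^su\in L^2(\Om)$, and let $v\in W^{s,2}(\RR^N)$. The left-hand side of \eqref{Int-Part1} is $\mathcal E(u,v)$ up to constants (see \eqref{eq-BF}), hence a bounded bilinear form in $(u,v)$ with respect to the $W^{s,2}(\RR^N)$-norm by Proposition~\ref{prop:Gag}. On the right-hand side, $\int_\Om v(-\Delta)^su\,dx$ is controlled by $\|v\|_{L^2(\Om)}\|(-\Delta)^su\|_{L^2(\Om)}$, and $\int_{\Omc}v\,\mathcal N_su\,dx$ is controlled by $\|v\|_{L^2(\Omc)}\|\mathcal N_su\|_{L^2(\Omc)}$, the latter being finite and bounded in terms of $\|u\|_{W^{s,2}(\RR^N)}$ by Lemma~\ref{lem:Nmap}. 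Therefore both sides are continuous in $v\in W^{s,2}(\RR^N)$ for fixed admissible $u$. Since $C_c^\infty(\RR^N)$ is dense in $W^{s,2}(\RR^N)$, approximate $v$ by smooth compactly supported functions $v_n\to v$; the identity for $(u,v_n)$ passes to the limit provided $u$ itself is smooth. To also relax $u$, one approximates $u$ by mollification/truncation $u_n\to u$ in $W^{s,2}(\RR^N)$ in such a way that $(-\Delta)^su_n\to(-\Delta)^su$ in $L^2(\Om)$ as well — this is where one invokes the density statement in Remark~\ref{rem-den} (or a direct mollification argument using that the operator $(-\Delta)_{\Sigma_1}^s$ and its domain behave well), so that the three terms converge simultaneously.

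The main obstacle is the last density step: one must approximate an arbitrary $u$ with $(-\Delta)^su\in L^2(\Om)$ by smooth functions $u_n$ with simultaneous convergence of $u_n\to u$ in $W^{s,2}(\RR^N)$ and of $(-\Delta)^su_n\to(-\Delta)^su$ in $L^2(\Om)$. The natural route is to note that $u$ belongs (after adjusting boundary data) to the domain of the operator $(-\Delta)_{\Sigma_1}^s$ or a related maximal operator, and to use that $C_c^\infty(\RR^N\setminus\Sigma_1)\subset D((-\Delta)_{\Sigma_1}^s)$ together with graph-norm density, i.e. Remark~\ref{rem-den}; alternatively one uses spectral truncation of the self-adjoint operator. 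The smooth-case symmetrization, by contrast, is routine once the principal value is handled via the $\varepsilon$-truncation, and the mapping properties needed for the continuity bounds are exactly Lemma~\ref{lem:Nmap} and Proposition~\ref{prop:Gag}. I would present the smooth case in full, then state the two density passages, citing \cite{HAntil_RKhatri_MWarma_2018a,SDipierro_XRosOton_EValdinoci_2017a} for the technical details of the approximation.
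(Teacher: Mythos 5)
Your outline reconstructs exactly the route behind the paper's citation: the paper does not prove Proposition~\ref{prop:prop} itself but points to \cite[Lemma 3.3]{SDipierro_XRosOton_EValdinoci_2017a} for the smooth case (the symmetrization over $E=\RR^{2N}\setminus(\Omc)^2$ with the $\varepsilon$-truncated principal value, which you carry out correctly) and to \cite[Proposition 2.2]{HAntil_RKhatri_MWarma_2018a} for the passage to Sobolev functions by density, using the continuity bounds supplied by Lemma~\ref{lem:Nmap} and Proposition~\ref{prop:Gag}. One small caution: Remark~\ref{rem-den} is not the right tool for the $u$-density step, since it gives density of $D((-\Delta)_{\Sigma_1}^s)$ (which requires $u=0$ in $\Sigma_1$ and $\mathcal N_s u=0$ in $\Sigma_2$) only in $L^2(\Omega)$ and $W^{s,2}(\Omega,\Sigma_1)$, not graph-norm density in $\{u\in W^{s,2}(\RR^N):(-\Delta)^s u\in L^2(\Omega)\}$; the mollification/truncation argument you mention in the alternative, as in \cite[Proposition 2.2]{HAntil_RKhatri_MWarma_2018a}, is the one that actually closes the proof.
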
 

We observe the following.

\begin{remark}
Let $(-\Delta)_{\Sigma_1}^s$ be the operator defined in \eqref{def-op}. Using the integration by parts formula \eqref{Int-Part1} we can deduce that
\begin{equation}\label{def-op-2}
\begin{cases}
D((-\Delta)_{\Sigma_1}^s)=\left\{u\in  W^{s,2}(\Om,\Sigma_1):\; \mathcal N_su=0\;\mbox{ in } \Sigma_2,\;(-\Delta)^s|_{\Omega}\in  L^2(\Om)\right\},\\
(-\Delta)_{\Sigma_1}^su=(-\Delta)^su\;\mbox{ in }\;\Omega.
\end{cases}
\end{equation}
\end{remark} 
 
The version of the integration-by-parts formula we will frequently use in this paper is the following 
(see e.g. \cite[Proposition~6]{BAbdellaoui_ADieb_EValdinoci_2018a}).
 \begin{proposition}[\bf The integration by parts formula II]
 \label{prop:prop_1}
 Let $u,v \in W^{s,2}(\Omega,\Sigma_1)$. 
 Then,
      \begin{align}\label{Int-Part}
       \frac{C_{N,s}}{2} 
        \int\int_{\RR^{2N}\setminus(\RR^N\setminus\Om)^2} &
         \frac{(u(x)-u(y))(v(x)-v(y))}{|x-y|^{N+2s}} \;dxdy \notag\\
        =& \langle (-\Delta)^s u,v\rangle_{W^{-s,2}(\Omega,\Sigma_1),W^{s,2}(\Omega,\Sigma_1)} + \int_{\Sigma_2} v\mathcal{N}_s u\;dx .
      \end{align}       
 \end{proposition}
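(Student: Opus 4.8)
The plan is to obtain \eqref{Int-Part} from the integration by parts formula \eqref{Int-Part1} by an approximation argument, observing first that the left-hand side of \eqref{Int-Part} is exactly the bilinear form $\mathcal E(u,v)$ of \eqref{eq-BF}, which is continuous on $W^{s,2}(\Omega,\Sigma_1)$ by Proposition~\ref{prop:Gag}. Before doing so I would record two elementary reductions valid for any $u,v\in W^{s,2}(\Omega,\Sigma_1)$. Since $v=0$ a.e.\ in $\Sigma_1$ and $\RR^N\setminus\Omega=\overline{\Sigma_1}\cup\overline{\Sigma_2}$, the exterior integral in \eqref{Int-Part1} reduces to $\int_{\Sigma_2}v\,\mathcal N_s u\,dx$. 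Moreover this term is a bounded linear functional of $v$: by Lemma~\ref{lem:Nmap} one has $\mathcal N_s u\in L^2(\RR^N\setminus\Omega)$, and the continuous embedding $W^{s,2}(\RR^N)\hookrightarrow L^2(\RR^N)$ together with Proposition~\ref{prop:Gag} gives $\|v\|_{L^2(\Sigma_2)}\le C\|v\|_{W^{s,2}(\Omega,\Sigma_1)}$, so Cauchy--Schwarz yields $\big|\int_{\Sigma_2}v\,\mathcal N_s u\,dx\big|\le C\|u\|_{W^{s,2}(\RR^N)}\|v\|_{W^{s,2}(\Omega,\Sigma_1)}$. Hence $v\mapsto\mathcal E(u,v)-\int_{\Sigma_2}v\,\mathcal N_s u\,dx$ belongs to $W^{-s,2}(\Omega,\Sigma_1)$; this functional is what $(-\Delta)^s u$ denotes in \eqref{Int-Part}, and it remains to prove the identity and to check that this notation is consistent with the usage in \eqref{Int-Part1}.

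For the identity I would choose $u_n\in C_c^\infty(\RR^N)$ with $u_n\to u$ in $W^{s,2}(\RR^N)$. Each $u_n$ lies in $W^{s,2}(\RR^N)$ with $(-\Delta)^s u_n\in L^2(\Omega)$, so Proposition~\ref{prop:prop} applies with test function $v\in W^{s,2}(\Omega,\Sigma_1)\subset W^{s,2}(\RR^N)$ and, after the first reduction,
\begin{equation*}
\frac{C_{N,s}}{2}\int\int_{\RR^{2N}\setminus(\RR^N\setminus\Omega)^2}\frac{(u_n(x)-u_n(y))(v(x)-v(y))}{|x-y|^{N+2s}}\,dxdy=\int_\Omega v\,(-\Delta)^s u_n\,dx+\int_{\Sigma_2}v\,\mathcal N_s u_n\,dx .
\end{equation*}
Letting $n\to\infty$, the left-hand side converges to $\mathcal E(u,v)$ because the bilinear map $(w,v)\mapsto\frac{C_{N,s}}{2}\int\int_{\RR^{2N}\setminus(\RR^N\setminus\Omega)^2}\frac{(w(x)-w(y))(v(x)-v(y))}{|x-y|^{N+2s}}\,dxdy$ is bounded on $W^{s,2}(\RR^N)\times W^{s,2}(\RR^N)$, being dominated by the full Gagliardo seminorm; and $\int_{\Sigma_2}v\,\mathcal N_s u_n\,dx\to\int_{\Sigma_2}v\,\mathcal N_s u\,dx$ by the continuity of $\mathcal N_s$ in Lemma~\ref{lem:Nmap} together with Cauchy--Schwarz. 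Consequently $\int_\Omega v\,(-\Delta)^s u_n\,dx$ converges as well, and passing to the limit gives precisely \eqref{Int-Part}, with $\langle(-\Delta)^s u,v\rangle$ equal to the bounded functional $\mathcal E(u,v)-\int_{\Sigma_2}v\,\mathcal N_s u\,dx$ identified above (in particular the limit does not depend on the approximating sequence). Finally, to see that this functional really is the fractional Laplacian of $u$, I would test it against $\phi\in\mathcal D(\Omega)$: then $\int_{\Sigma_2}\phi\,\mathcal N_s u\,dx=0$, and since $\phi(x)-\phi(y)=0$ whenever $x,y\in\RR^N\setminus\Omega$, the integration in $\mathcal E(u,\phi)$ may be extended to all of $\RR^{2N}$, which recovers the distributional action of $(-\Delta)^s u$ on $\phi$; and when in addition $(-\Delta)^s u\in L^2(\Omega)$, Proposition~\ref{prop:prop} shows directly that the functional coincides with $v\mapsto\int_\Omega v\,(-\Delta)^s u\,dx$, so the notation in \eqref{Int-Part} is compatible with that in \eqref{Int-Part1}.

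I expect the main obstacle to be the passage to the limit in the nonlocal normal derivative term, $\int_{\Sigma_2}v\,\mathcal N_s u_n\,dx\to\int_{\Sigma_2}v\,\mathcal N_s u\,dx$: this relies on the precise mapping properties of the interaction operator recorded in Lemma~\ref{lem:Nmap} (its continuity into $W^{s,2}_{\rm loc}(\RR^N\setminus\Omega)$ and the consequent membership and control in $L^2(\RR^N\setminus\Omega)$), and on consistently reading $(-\Delta)^s u$ as an element of $W^{-s,2}(\Omega,\Sigma_1)$ rather than as a genuine function on $\Omega$. Everything else is a routine combination of Propositions~\ref{prop:Gag} and \ref{prop:prop}, the embedding $W^{s,2}(\RR^N)\hookrightarrow L^2(\RR^N)$, and Cauchy--Schwarz.
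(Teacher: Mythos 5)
Your proof follows the same overall strategy as the paper — approximate $u$, apply the integration-by-parts formula I (Proposition~\ref{prop:prop}) for the approximants, and pass to the limit using the continuity of the bilinear form and of $\mathcal N_s$. The one genuine difference is the approximating sequence: the paper picks $u_n\in D((-\Delta)_{\Sigma_1}^s)$, the domain of the $L^2$-realization, relying on the density claim of Remark~\ref{rem-den}; you instead take $u_n\in C_c^\infty(\RR^N)$ with $u_n\to u$ in $W^{s,2}(\RR^N)$, which rests only on the classical density of test functions in $W^{s,2}(\RR^N)$. Your choice buys you two small simplifications: you never need Remark~\ref{rem-den} or the operator $(-\Delta)_{\Sigma_1}^s$ itself, and you do not need the approximants to lie in $W^{s,2}(\Omega,\Sigma_1)$, only the (trivial) observation that the restricted Gagliardo bilinear map is bounded on $W^{s,2}(\RR^N)\times W^{s,2}(\RR^N)$. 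The tradeoff is that your $u_n$ do not vanish on $\Sigma_1$, so you must (and correctly do) invoke $v=0$ a.e.\ in $\Sigma_1$ to reduce the exterior integral to $\int_{\Sigma_2}v\,\mathcal N_su_n\,dx$, and you obtain convergence of $\int_\Omega v(-\Delta)^su_n\,dx$ only indirectly, as the difference of the other two convergent terms, rather than from an a~priori bound on $(-\Delta)^s$ as an operator into $W^{-s,2}(\Omega,\Sigma_1)$ as in the paper's Step~1. The concluding consistency check against $\phi\in\mathcal D(\Omega)$ is extra content not present in the paper's proof, and it is a sensible addition.

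One point worth flagging for both proofs: the passage $\int_{\Sigma_2}v\,\mathcal N_su_n\,dx\to\int_{\Sigma_2}v\,\mathcal N_su\,dx$ requires that $u\mapsto\mathcal N_su$ be continuous from $W^{s,2}(\RR^N)$ into $L^2(\Sigma_2)$ (or at least that the pairing with the fixed $v$ passes to the limit). As stated, Lemma~\ref{lem:Nmap} only asserts continuity into $W^{s,2}_{\loc}(\RR^N\setminus\Omega)$ and, separately, membership of $\mathcal N_su$ in $L^2(\RR^N\setminus\Omega)$; the $L^2$-continuity used here is a stronger claim. You correctly identify this as the delicate step. The paper cites the same lemma for the same purpose, so this is not a defect peculiar to your argument, but if you were to write this up you should either strengthen the statement of Lemma~\ref{lem:Nmap} to record the $L^2$-boundedness explicitly or supply a short estimate for it.
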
 

\begin{proof}
We notice that \eqref{Int-Part} has been stated differently in \cite[Proposition~6]{BAbdellaoui_ADieb_EValdinoci_2018a} without given a proof. They have replaced the duality map $\langle\cdot, \cdot\rangle$ with the scalar product $(\cdot,\cdot)_{L^2(\Omega)}$.
We think the right formulation is as given in \eqref{Int-Part}. For that reason we include the proof. We proceed in  two steps.

{\bf Step 1}: Firstly,  we observe that $(-\Delta)_{\Sigma_1}^s$ defined in \eqref{def-op} can be viewed as a bounded operator from $W^{s,2}(\Omega,\Sigma_1)$ into its dual $W^{-s,2}(\Omega,\Sigma_1)$ given by 
\begin{align*}
 \langle (-\Delta)_{\Sigma_1}^s u,v\rangle_{W^{-s,2}(\Omega,\Sigma_1),W^{s,2}(\Omega,\Sigma_1)}
 =\frac{C_{N,s}}{2} 
        \int\int_{\RR^{2N}\setminus(\RR^N\setminus\Om)^2} 
         \frac{(u(x)-u(y))(v(x)-v(y))}{|x-y|^{N+2s}} \;dxdy.
   \end{align*}
         
{\bf Step 2}: Let $u_n\in D((-\Delta)_{\Sigma_1}^s)$ be a sequence that converges to $u$ in  $W^{s,2}(\Omega,\Sigma_1)$, as $n\to\infty$. Existence of $u_n$ follows from Remark \ref{rem-den}.
It follows from Proposition \ref{prop:prop} that for every $n\in\mathbb N$ and $v\in W^{s,2}(\Omega,\Sigma_1)$, we have that,
\begin{align}\label{Int-seq}
       \frac{C_{N,s}}{2} 
        \int\int_{\RR^{2N}\setminus(\RR^N\setminus\Om)^2} &
         \frac{(u_n(x)-u_n(y))(v(x)-v(y))}{|x-y|^{N+2s}} \;dxdy \notag\\
        =& \int_\Om v(-\Delta)^s u_n\;dx + \int_{\Omc} v\mathcal{N}_s u_n\;dx \notag\\
        =&\langle (-\Delta)^s u_n,v\rangle_{W^{-s,2}(\Omega,\Sigma_1),W^{s,2}(\Omega,\Sigma_1)} + \int_{\Omc} v\mathcal{N}_s u_n\;dx .
      \end{align}
  Since $u_n$ converges to $u$  in $W^{s,2}(\Omega,\Sigma_1)$, as $n\to\infty$, it follows from Step 1 that $(-\Delta)^s u_n$ converges to $(-\Delta)^s u$ in $W^{-s,2}(\Omega,\Sigma_1)$, as $n\to\infty$.  It also follows from Lemma \ref{lem:Nmap} (the continuity of the operator $\mathcal N_s$) that $\mathcal N_su_n$ 
  converges to $\mathcal N_su$ in $L^2(\RR^N\setminus\Omega)$, as $n\to\infty$.  Finally since $v=0$ in $\Sigma_1$,  using all the above convergences and taking the limit of both sides of \eqref{Int-seq}, as $n\to\infty$, we get \eqref{Int-Part} and the proof is finished.
\end{proof}

\subsection{Useful results from convex analysis}

We will additionally require the following fundamental concepts from Convex Analysis. Consider a general problem of the form 
\begin{equation}\label{Gen}
    \min_{w \in W} f(w) \quad \mbox{subject to} \; G(w) \in \mathcal{K}_G, \; w \in \mathcal{C}
\end{equation}
where $W$ and $V$ are Banach spaces and $f: W \rightarrow \RR$, $G: W \rightarrow V$ are continuously Fr\'echet differentiable. Further, suppose that $\mathcal{C} \subset W$ is a non-empty, closed set and convex and $\mathcal{K}_G \subset V$ is a closed, convex cone. 

The feasible set is defined by 
\begin{align}\label{Gen Feasible}
    F := \{w \in W: G(w) \in \mathcal{K}_G, w \in \mathcal{C}\}.
\end{align}

Then, when $F \subset W$ is nonempty, we define the tangent cone of $F$ at $w \in F$ by 
\begin{equation}\label{Gen Tan Cone}
    T(F;w) := \{\tau \in W: \mbox{ for each } k \in\N, \exists r_k >0 \;  w_k \in F: \lim_{k \rightarrow \infty}w_k = w, \lim_{k \rightarrow \infty} r_k (w_k - w) = \tau\}
\end{equation}
and the linearization cone at a point $w \in F$ by 
\begin{equation}\label{Gen Lin Cone}
    L(F;w) := \{rh: r > 0, \ h\in W, \ G(w) + G'(w)h \in \mathcal{K}_G, \ w+h \in \mathcal{C}\}.
\end{equation}

For an optimal solution, $\overline{w}$ of \eqref{Gen}, it can be shown that the existence of Lagrange multipliers and construction of first order optimality conditions is dependent upon the linearization cone at $\overline{w} \in F$ to be contained in the tangent cone of $F$ at $\overline{w} \in F$. That is, $$L(F;\overline{w}) \subset T(F;\overline{w}).$$ This is sometimes referred to as the Abadie Constraint Qualification or just Constraint Qualification. A more detailed discussion of constraint qualifications has been addressed in \cite{ Guignard, Ulbrich_Hinze, Zowe_Kurcyusz}.

When formulating the Lagrangian in Section \ref{s:VI_wellPosed} it is necessary to introduce a few additional definitions. We refer to \cite{Eklend_Temam} for more details. As before, suppose that $W$ is a Banach space and let $W^*$ denote its topological dual with duality pairing $\langle \cdot, \cdot \rangle_{W^*, W}.$ Given $f: W \rightarrow \RR \cup \{+\infty\}$, its Fenchel conjugate is given by $f^*: W^* \rightarrow \RR \cup \{+\infty\}$, where
\begin{equation}\label{Fenchel1}
    f^*(\lambda) := \sup_{w \in W} \{\langle \lambda, w \rangle_{W^*, W} - f(w)\}.
\end{equation}

Additionally, for a convex set $\mathcal{Z} \subset W$ we define the indicator functional of $\mathcal{Z}$ by $I_{\mathcal{Z}}: W \rightarrow \RR \cup \{+\infty\}$, 
\begin{equation}\label{Indicator Func}
         I_{\mathcal{Z}}(u) = \begin{cases}
          0 & \mbox{if} \; u \in \mathcal Z \\
          +\infty & \mbox{if} \; u \notin \mathcal Z.
      \end{cases}
\end{equation}

Then, in light of \eqref{Fenchel1} and \eqref{Indicator Func} we have the following result.

\begin{lemma} \label{Lemma:dual_ind}
    Consider the sets $\mathcal{Z}^- = \{w \in W: w \leq 0\}$ and $\mathcal{Z}^+ = \{\eta \in W^*: \eta \geq 0\}.$ If $W$ is reflexive, then $I^*_{\mathcal{Z^-}}(\lambda) = I_{\mathcal{Z^+}}(\lambda)$ and  $I^*_{Z^+}(u) = I_{\mathcal{Z^-}}(u).$
\end{lemma}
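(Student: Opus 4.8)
The plan is to prove the two Fenchel-conjugate identities directly from the definitions \eqref{Fenchel1} and \eqref{Indicator Func}, exploiting reflexivity only to identify the bidual with the primal space. First I would compute $I^*_{\mathcal{Z}^-}$. By definition,
\[
    I^*_{\mathcal{Z}^-}(\lambda) = \sup_{w \in W}\left\{\langle \lambda, w\rangle_{W^*,W} - I_{\mathcal{Z}^-}(w)\right\} = \sup_{w \leq 0}\langle \lambda, w\rangle_{W^*,W},
\]
since the indicator contributes $0$ on $\mathcal{Z}^-$ and $+\infty$ elsewhere, so only $w \le 0$ matter. The claim is that this supremum equals $0$ when $\lambda \ge 0$ and $+\infty$ otherwise, i.e. it equals $I_{\mathcal{Z}^+}(\lambda)$. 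If $\lambda \ge 0$, then $\langle \lambda, w\rangle \le 0$ for every $w \le 0$ (this is exactly the duality/ordering relation: the dual cone of the negative cone acts nonpositively on it), and the value $0$ is attained at $w = 0 \in \mathcal{Z}^-$, so the sup is $0$. If $\lambda \not\ge 0$, there exists some $w_0 \ge 0$ (equivalently $-w_0 \le 0$) with $\langle \lambda, w_0\rangle > 0$; then $\langle \lambda, -t w_0\rangle = -t\langle\lambda, w_0\rangle \to +\infty$ as $t \to +\infty$ along $-tw_0 \in \mathcal{Z}^-$, so the sup is $+\infty$. This gives $I^*_{\mathcal{Z}^-}(\lambda) = I_{\mathcal{Z}^+}(\lambda)$.

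Next I would compute $I^*_{\mathcal{Z}^+}$, which is a functional on $W^{**}$; reflexivity of $W$ lets us regard it as a functional on $W$, with $\langle u, \eta\rangle_{W^{**},W^*} = \langle \eta, u\rangle_{W^*,W}$ for $u \in W$, $\eta \in W^*$. Then
\[
    I^*_{\mathcal{Z}^+}(u) = \sup_{\eta \in W^*}\left\{\langle \eta, u\rangle_{W^*,W} - I_{\mathcal{Z}^+}(\eta)\right\} = \sup_{\eta \geq 0}\langle \eta, u\rangle_{W^*,W}.
\]
By the symmetric argument: if $u \le 0$ the pairing is nonpositive for all $\eta \ge 0$ and equals $0$ at $\eta = 0$, so the sup is $0$; if $u \not\le 0$ one needs a separation statement producing $\eta_0 \ge 0$ with $\langle \eta_0, u\rangle > 0$, and then scaling $\eta_0$ sends the sup to $+\infty$. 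This yields $I^*_{\mathcal{Z}^+}(u) = I_{\mathcal{Z}^-}(u)$.

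The main obstacle is the ``only if'' direction in each case, i.e. showing that failure of the sign condition forces the supremum to be $+\infty$; this is where an ordered-vector-space/Hahn--Banach separation argument is genuinely needed. Concretely, the order on $W$ (and on $W^*$) must be such that the positive cone is closed and convex and its dual cone is precisely $\mathcal{Z}^+$ (resp.\ $\mathcal{Z}^-$); then ``$\lambda \not\ge 0$'' means $\lambda$ lies outside the dual cone of $\mathcal{Z}^- $, and by the bipolar theorem there is an element of $\mathcal{Z}^-$ on which $\lambda$ is strictly positive, which is exactly what drives the sup to $+\infty$. In our application $W = W^{s,2}(\Sigma_2)$ (or a similar Hilbert--Sobolev space), the pointwise a.e.\ order is used, the cone is closed and convex, reflexivity is automatic, and these hypotheses hold; I would simply remark that the ordered-Banach-space structure here satisfies the bipolar/duality-of-cones property and cite \cite{Eklend_Temam} for the standard statement, rather than reproving it. The rest is the routine bookkeeping with the indicator and the scaling argument sketched above.
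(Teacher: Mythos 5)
Your approach is the same as the paper's: expand the Fenchel conjugate from the definitions and observe that the supremum over $\mathcal{Z}^-$ reproduces the indicator of $\mathcal{Z}^+$. The paper's proof stops at the displayed chain of equalities and says ``the second equality follows in a similar fashion,'' whereas you supply the case analysis behind both assertions, which is genuinely the content.

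One sign slip in the first case: with the order on $W^*$ defined as the dual cone ($\lambda\ge 0$ iff $\langle\lambda,w\rangle\ge 0$ for all $w\ge 0$), the negation $\lambda\not\ge 0$ produces $w_0\ge 0$ with $\langle\lambda,w_0\rangle<0$, not $>0$. Then $-tw_0\in\mathcal{Z}^-$ and $\langle\lambda,-tw_0\rangle=-t\langle\lambda,w_0\rangle\to+\infty$ as $t\to+\infty$, which is the conclusion you want; as written ($\langle\lambda,w_0\rangle>0$) the displayed quantity would diverge to $-\infty$. Your remark that the second identity requires more than just symmetry --- one needs a bipolar/separation statement to produce $\eta_0\ge 0$ with $\langle\eta_0,u\rangle>0$ whenever $u\not\le 0$, since the order on $W$ is given by the primal cone rather than defined as a dual cone --- is correct and is a genuine subtlety that the paper's one-line ``similar fashion'' elides.
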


\begin{proof} Notice that the Fenchel conjugate of $I_{\mathcal{Z}^-}$, is given by
  $I_{\mathcal{Z}^-}^*: W^* \rightarrow \RRext$ where,
  \begin{align*}
      I_{\mathcal{Z}^-}^*(\lambda) = &\sup_{w \in W} \{\langle \lambda, w\rangle_{W^*,W} - I_{\mathcal{Z}^-}(w)\} \\
      = & \sup_{v \in \mathcal{Z}^-} \langle \lambda, w \rangle_{W^*,W} \\
      =& I_{Z^+} (\lambda) \, .
  \end{align*}  
  The second equality follows in a similar fashion. 
The proof is finished.
  \end{proof}

\section{Well-posedness of the variational inequality}
\label{s:VI_wellPosed}

We begin this section by introducing the notion of solutions to the minimization problem \eqref{eq:min_prob}. 
Throughout the rest of the paper $\Omega$, $\Sigma_1$ and $\Sigma_2$ are as in the introduction. We also assume the following.

\begin{assumption}
We assume that $\Sigma_1$ has the extension property in the sense that for every $z\in W^{s,2}(\Sigma_1)$, there exists a function $\mathcal Z\in W^{s,2}(\RR^N)$ such that $\mathcal Z|_{\Sigma_1}=z$.
\end{assumption}

Next, we give our notion of solutions to  \eqref{eq:min_prob}.

    \begin{definition}\label{def:solnot}
        Given $f \in W^{-s,2}(\Om,\Sigma_1)$, $z \in W^{s,2}(\Sigma_1)$, 
        $\varphi \in W^{s,2}(\Sigma_2)$, let $\mathcal{Z} \in W^{s,2}(\RR^N)$ be such 
        that $\mathcal{Z}|_{\Sigma_1} = z$.         
        Then, $u \in \mathcal{K}$ solves \eqref{eq:min_prob},  if 
        $u-\mathcal{Z} \in \mathcal{K}_0$ solves the minimization problem 
        \begin{equation}\label{eq:min_prob_1}
            \min_{u -\mathcal{Z} \in \mathcal{K}_0} J(u), 
        \end{equation}
        where     
        $$
            \mathcal{K}_0 
            := \{ w \in W^{s,2}(\RR^N) \, : \, w = 0 \mbox{ in } 
                \Sigma_1, \  w \le \varphi \mbox{ in } 
                \Sigma_2 \}  
        $$ 
    and we recall that
    $$\mathcal K=\{u\in W^{s,2}(\RR^N):\; u=z\mbox{ in }\Sigma_1,\; u\le\varphi\;\mbox{ in }\Sigma_2\}.$$
    \end{definition}

Notice that $\mathcal{K}$ and $\mathcal{K}_0$ only differs by the fact that functions in 
$\mathcal{K}_0$ are zero in $\Sigma_1$. 
The next result states the well-posedness of the minimization problem \eqref{eq:min_prob}
according to Definition~\ref{def:solnot}. 

\begin{theorem}
        Let $f \in W^{-s,2}(\Om,\Sigma_1)$, $z \in W^{s,2}(\Sigma_1)$ and 
        $\varphi \in W^{s,2}(\Sigma_2)$.
        Then, there exists a unique solution $u \in \mathcal{K}$ to the minimization problem 
        \eqref{eq:min_prob} according to Definition~\ref{def:solnot}. 
    \end{theorem}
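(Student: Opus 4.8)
The plan is to obtain existence and uniqueness for \eqref{eq:min_prob} via the direct method in the calculus of variations applied to the shifted problem \eqref{eq:min_prob_1}. First I would reduce to a homogeneous Dirichlet condition: fix an extension $\mathcal Z\in W^{s,2}(\RR^N)$ with $\mathcal Z|_{\Sigma_1}=z$ (available by the Assumption), set $w:=u-\mathcal Z$, and rewrite $J(u)=\mathcal J(w)$ as a functional on the affine constraint set $\mathcal K_0$. A short computation expanding the quadratic form in \eqref{eq:min_prob} shows
\[
\mathcal J(w)=\frac{1}{2}\,a(w,w)+\ell(w)+c,
\]
where $a(w,v):=\frac{C_{N,s}}{2}\int\!\int_{\RR^{2N}\setminus(\RR^N\setminus\Om)^2}\frac{(w(x)-w(y))(v(x)-v(y))}{|x-y|^{N+2s}}\,dxdy$ is exactly the bilinear form from Proposition~\ref{prop:Gag} (up to the factor $C_{N,s}/2$), $\ell$ is a bounded linear functional on $W^{s,2}(\Om,\Sigma_1)$ collecting the cross term $a(\mathcal Z,w)$ and $-\langle f,w\rangle$, and $c$ is a constant independent of $w$. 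Here I use that elements of $\mathcal K_0$ vanish in $\Sigma_1$, so $w$ lives in $W^{s,2}(\Om,\Sigma_1)$ and the norm \eqref{NORM} is the natural one.

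Next I would verify the three ingredients of the direct method on the set $\mathcal K_0\subset W^{s,2}(\Om,\Sigma_1)$. Convexity and closedness of $\mathcal K_0$: it is the intersection of the closed subspace $\{w=0 \text{ in }\Sigma_1\}$ with the closed convex set $\{w\le\varphi \text{ in }\Sigma_2\}$ (closedness of the latter follows because $W^{s,2}$-convergence implies, along a subsequence, a.e. convergence in $\Sigma_2$, and the nonemptiness of $\mathcal K_0$ — hence of $\mathcal K$ — can be arranged, e.g. by truncating a suitable extension below $\varphi$ in $\Sigma_2$, or is part of the standing hypotheses since otherwise the problem is vacuous). Coercivity: by Proposition~\ref{prop:Gag} the form $a(\cdot,\cdot)$ is equivalent to $\|\cdot\|_{W^{s,2}(\Om,\Sigma_1)}^2$, so $a(w,w)\ge \alpha\|w\|^2$ for some $\alpha>0$; combined with the bound $|\ell(w)|\le C\|w\|$ this gives $\mathcal J(w)\ge \frac{\alpha}{2}\|w\|^2-C\|w\|+c\to+\infty$ as $\|w\|\to\infty$. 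Weak lower semicontinuity: $w\mapsto a(w,w)$ is continuous and convex, hence weakly lower semicontinuous, and $\ell$ is weakly continuous. A minimizing sequence in $\mathcal K_0$ is therefore bounded, has a weakly convergent subsequence with limit in $\mathcal K_0$ (weak closedness of the closed convex set $\mathcal K_0$ in the Hilbert space $W^{s,2}(\Om,\Sigma_1)$, which is reflexive by Proposition~\ref{prop:Gag}), and the limit is a minimizer by lower semicontinuity. Translating back, $u:=w+\mathcal Z\in\mathcal K$ solves \eqref{eq:min_prob} in the sense of Definition~\ref{def:solnot}.

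Uniqueness follows from strict convexity: since $a(\cdot,\cdot)$ is coercive, $\mathcal J$ is strictly convex on the convex set $\mathcal K_0$, so its minimizer is unique; equivalently, if $w_1,w_2$ were two minimizers, the parallelogram-type identity $a\!\left(\tfrac{w_1-w_2}{2},\tfrac{w_1-w_2}{2}\right)=\tfrac12 a(w_1,w_1)+\tfrac12 a(w_2,w_2)-a\!\left(\tfrac{w_1+w_2}{2},\tfrac{w_1+w_2}{2}\right)$ together with $\tfrac{w_1+w_2}{2}\in\mathcal K_0$ forces $w_1=w_2$ in $W^{s,2}(\Om,\Sigma_1)$, hence $u_1=u_2$. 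Finally, one checks that the solution does not depend on the chosen extension $\mathcal Z$: two extensions differ by an element $h\in W^{s,2}(\RR^N)$ with $h|_{\Sigma_1}=0$, and the map $w\mapsto w+h$ carries the minimization over $\{u-\mathcal Z\in\mathcal K_0\}$ bijectively onto the one over $\{u-\mathcal Z'\in\mathcal K_0\}$ while leaving $J(u)$ unchanged, so the two problems have the same solution set, namely $\{u\}$.

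The main obstacle, and the point requiring the most care, is the interplay between the two different exterior conditions when verifying that $\mathcal K_0$ is a genuinely well-behaved (nonempty, closed, convex) subset of the Hilbert space on which coercivity holds: the Dirichlet part $\{w=0$ in $\Sigma_1\}$ is a closed subspace that makes $\|\cdot\|_{W^{s,2}(\Om,\Sigma_1)}$ a norm (Proposition~\ref{prop:Gag}), while the obstacle part $\{w\le\varphi$ in $\Sigma_2\}$ is only a closed convex set; one must confirm these are compatible — i.e. $\mathcal K\neq\emptyset$ — and that weak convergence in $W^{s,2}(\Om,\Sigma_1)$ is strong enough to pass to the limit in the pointwise a.e. inequality on $\Sigma_2$, which it is via compact embedding into $L^2_{\rm loc}$ and extraction of an a.e.-convergent subsequence. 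Everything else is the standard Lions–Stampacchia / direct-method machinery, made to run on the nonlocal energy space provided by Proposition~\ref{prop:Gag}.
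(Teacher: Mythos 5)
Your proof is correct and follows essentially the same route as the paper: shift by the extension $\mathcal{Z}$, reduce to a minimization over the closed convex set $\mathcal{K}_0\subset W^{s,2}(\Omega,\Sigma_1)$, apply the direct method (coercivity from Proposition~\ref{prop:Gag}, weak lower semicontinuity of the convex quadratic functional, weak closedness of $\mathcal{K}_0$), and deduce uniqueness from strict convexity. The only substantive difference is the handling of the change of variables: you expand $J(w+\mathcal{Z})$ and carry the cross term $\mathcal E(\mathcal{Z},w)$ in a modified linear functional, whereas the paper's Definition~\ref{def:solnot} together with \eqref{eq:min_prob_12} takes the shifted objective to be the functional $J$ itself evaluated at $w$ (linear part $-\langle f,w\rangle$, no cross term); these are not literally the same functional, so your closing remark about independence of the solution from the choice of $\mathcal{Z}$ pertains to your reading of \eqref{eq:min_prob} rather than to the paper's. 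In either case the shifted functional is quadratic-plus-linear on the same closed convex set, so the existence and uniqueness argument is the same.
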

    
    \begin{proof}
        Let $u\in\mathcal K$ and set $w:=u-\mathcal{Z}$. Notice that $w|_{\Sigma_1} = 0$. Then,  using the definition of the functional
        $J$, the minimization problem \eqref{eq:min_prob_1} can be rewritten as follows:
        \begin{equation}\label{eq:min_prob_12}
            \min_{w \in \mathcal{K}_0} J(w) :=\frac{C_{N,s}}{4} 
               \int\int_{\RR^{2N}\setminus(\RR^N\setminus\Om)^2} 
               \frac{|w(x)-w(y)|^2}{|x-y|^{N+2s}} \;dxdy - \langle f, w \rangle_{W^{-s,2}(\Om,\Sigma_1),W^{s,2}(\Om,\Sigma_1)} .
        \end{equation}
        We recall from Proposition~\ref{prop:Gag} that the norm
        \[
        \|u\|_{W^{s,2}(\Om,\Sigma_1)} 
        = \left(\int\int_{\RR^{2N}\setminus(\RR^N\setminus\Om)^2} 
         \frac{|u(x)-u(y)|^2}{|x-y|^{N+2s}} \;dxdy\right)^{\frac 12}
        \]
        is equivalent to the one induced by the space $W^{s,2}(\RR^N)$. Since $\mathcal{K}_0$ is
        closed and convex,  we have that the existence of solutions to the minimization problem \eqref{eq:min_prob_12}
        follows from the direct
        method of the calculus of variations. Uniqueness is due to the fact that the functional $J$ 
        is strictly convex.  The proof is finished.      
    \end{proof}

Throughout the remainder of the paper $\mathcal E$ denotes the bilinear form with domain $D(\mathcal E)=W^{s,2}(\Om,\Sigma_1)$ defined in \eqref{eq-BF}.

We have the following important result on various equivalent formulations to the minimization problem \eqref{eq:min_prob}, hence to \eqref{eq:min_prob_1}.
    \begin{proposition}\label{prop:equiv}
    The following assertions hold.
    \begin{enumerate}
   \item  A function $u\in\mathcal K$ solves the minimization problem \eqref{eq:min_prob} if and only if $w:=u-\mathcal Z\in \mathcal K_0$ satisfies the variational inequality
         \begin{equation}\label{eq: VI1}
      \mathcal{E}(w, v-w) - \langle f, v-w \rangle_{W^{-s,2}(\Om,\Sigma_1),W^{s,2}(\Om,\Sigma_1)} \geq 0, \;\forall v \in \mathcal K_0.
  \end{equation}
        
    \item The variational inequality \eqref{eq: VI1} is equivalent to 
        the following: There exists a non negative functional $\lambda\in W^{-s,2}(\Sigma_2)$ such that, 
    \begin{align}\label{eq: KKT}
      \begin{cases}
         \mathcal{E}(w,v) + \langle \lambda, v \rangle_{W^{-s,2}(\Sigma_2), W^{s,2}(\Sigma_2)} = \langle f,v \rangle_{W^{-s,2}(\Om,\Sigma_1),W^{s,2}(\Om,\Sigma_1)} & \mbox{for all} \; v \in W^{s,2}(\Omega, \Sigma_1) \\
         w \leq \varphi & \mbox{in} \; \Sigma_2 \\
         \langle \lambda, \Tilde{v} \rangle_{W^{-s,2}(\Sigma_2), W^{s,2}(\Sigma_2)} \leq 0 & \mbox{for all} \; \Tilde{v} \in \mathcal{K}_0^- \\
         \langle \lambda, w - \varphi \rangle_{W^{-s,2}(\Sigma_2), W^{s,2}(\Sigma_2)} = 0,
      \end{cases}
  \end{align}    
  where
    \begin{equation}\label{K0}
        \mathcal{K}_0^- = \{\tilde v \in W^{s,2}(\Sigma_2): \tilde v \leq 0\}.
         \end{equation}
         
    \item Additionally, the variational inequality \eqref{eq: VI1} is equivalent to the Euler-Lagrange equations
    \begin{align}\label{EL1}
        \begin{cases}
            (-\Delta)^s w = f & \mbox{in} \; \mathcal{D}(\Omega)^*, \\
            (-\Delta)^s w = 0 & \mbox{in} \; \mathcal{D}(\Sigma_2)^*, \\
                  \mathcal{N}_s w \leq 0 & \mbox{in} \; \Sigma_2, \\
            \mathcal{N}_s w = 0 & \mbox{in} \; \Sigma_2 \cap \{u < \varphi\}, \\
            u \leq \varphi & \mbox{in} \; \Sigma_2.
        \end{cases}
    \end{align}
   The last two conditions of \eqref{EL1} are also equivalent to the complimentarity condition 
    \begin{align}\label{CC}
        (u - \varphi) \mathcal{N}_s w = 0 \quad \mbox{in} \; \Sigma_2.
    \end{align}
   \end{enumerate}  
    \end{proposition}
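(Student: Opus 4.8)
The plan is to prove the three equivalences in turn, exploiting convexity at each stage. For part (1), the minimization problem \eqref{eq:min_prob_1} is the minimization of a strictly convex, differentiable functional over the closed convex set $\mathcal{K}_0$ (after the shift $w = u - \mathcal{Z}$), so the standard first-order optimality condition gives that $w$ is a minimizer if and only if $\langle J'(w), v - w\rangle \geq 0$ for all $v \in \mathcal{K}_0$. Computing the Gâteaux derivative of $J$ at $w$ using the bilinear form $\mathcal{E}$ and the equivalence of norms from Proposition~\ref{prop:Gag}, this is exactly \eqref{eq: VI1}. I would spell out that $\mathcal{K}_0$ is convex so that $w + t(v-w) \in \mathcal{K}_0$ for $t \in [0,1]$, which is what licenses the directional-derivative argument.

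For part (2), I would argue both directions. Starting from the variational inequality \eqref{eq: VI1}, I would first test with $v = w \pm \psi$ for $\psi \in W^{s,2}(\Omega,\Sigma_1)$ supported so that $w \pm \psi$ stays admissible — in particular, if $\psi$ is supported in $\overline{\Omega}$ (i.e. vanishes on $\Sigma_2$) then both signs are allowed, yielding $\mathcal{E}(w,\psi) = \langle f, \psi\rangle$ for such $\psi$; this identifies the action of $\mathcal{E}(w,\cdot) - \langle f,\cdot\rangle$ as a functional concentrated on $\Sigma_2$. Defining $\lambda \in W^{-s,2}(\Sigma_2)$ by $\langle \lambda, v\rangle_{\Sigma_2} := \langle f, v\rangle - \mathcal{E}(w,v)$ (well-defined on traces to $\Sigma_2$ by the previous step, together with the extension property in the Assumption and continuity of $\mathcal{E}$), the first equation of \eqref{eq: KKT} holds by construction. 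The sign condition $\langle \lambda, \tilde v\rangle \leq 0$ for $\tilde v \in \mathcal{K}_0^-$ follows by testing \eqref{eq: VI1} with $v = w + \tilde w$ where $\tilde w$ is an extension of $\tilde v \leq 0$ that vanishes on $\Sigma_1$ (such $v$ lies in $\mathcal{K}_0$ since $w \leq \varphi$ and $\tilde v \leq 0$ give $v \leq \varphi$ on $\Sigma_2$). The complementarity $\langle \lambda, w - \varphi\rangle_{\Sigma_2} = 0$ comes from testing with $v = \varphi$-extension and with $v = 2w - \varphi$-type choices to get both inequalities; here I would use $w - \varphi \in \mathcal{K}_0^-$ on $\Sigma_2$. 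This is the step I expect to be the main obstacle: one must be careful that the functional $v \mapsto \langle f, v\rangle - \mathcal{E}(w,v)$ genuinely descends to a well-defined, bounded functional on $W^{s,2}(\Sigma_2)$ (i.e. depends only on $v|_{\Sigma_2}$), which requires the kernel characterization and a clean use of the extension property; the nonlocality means $\mathcal{E}(w,v)$ does not obviously depend only on $v|_{\Sigma_2}$, so this needs the cancellation coming from the first step. Conversely, given $\lambda$ satisfying \eqref{eq: KKT}, for any $v \in \mathcal{K}_0$ one writes $\mathcal{E}(w, v - w) - \langle f, v-w\rangle = -\langle \lambda, v - w\rangle_{\Sigma_2} = -\langle \lambda, v - \varphi\rangle_{\Sigma_2} + \langle \lambda, w - \varphi\rangle_{\Sigma_2} = -\langle\lambda, v-\varphi\rangle_{\Sigma_2} \geq 0$ since $v - \varphi \leq 0$ on $\Sigma_2$ lies in $\mathcal{K}_0^-$ and $\lambda$ is nonpositive on that cone, recovering \eqref{eq: VI1}.

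For part (3), I would pass between \eqref{eq: KKT} and the Euler--Lagrange system \eqref{EL1} using the two integration-by-parts formulas. Testing the first equation of \eqref{eq: KKT} with $v \in \mathcal{D}(\Omega)$ (which vanishes on $\Sigma_2$, killing the $\lambda$ term) gives $(-\Delta)^s w = f$ in $\mathcal{D}(\Omega)^*$; testing with $v \in \mathcal{D}(\Sigma_2)$ and using Proposition~\ref{prop:prop_1} to move all of $\mathcal{E}(w,\cdot)$ onto $(-\Delta)^s w$ and the interaction term gives $(-\Delta)^s w = 0$ in $\mathcal{D}(\Sigma_2)^*$. Once $(-\Delta)^s w \in L^2$ on both pieces, Proposition~\ref{prop:prop_1} rewrites the first line of \eqref{eq: KKT} as $\int_{\Sigma_2} v\,\mathcal{N}_s w\,dx + \langle\lambda, v\rangle_{\Sigma_2} = 0$ for all admissible $v$, which identifies $\lambda = -\mathcal{N}_s w$ as an element of $L^2(\Sigma_2)$ (using Lemma~\ref{lem:Nmap}); the nonnegativity of $\lambda$ on $\mathcal{K}_0^-$ then translates to $\mathcal{N}_s w \leq 0$ a.e. in $\Sigma_2$, and the complementarity $\langle\lambda, w - \varphi\rangle_{\Sigma_2} = 0$ becomes $\int_{\Sigma_2}(\varphi - w)\mathcal{N}_s w = 0$; since $\varphi - w \geq 0$ and $-\mathcal{N}_s w \geq 0$, the integrand is nonnegative, hence vanishes a.e., giving both $(u - \varphi)\mathcal{N}_s w = 0$ (the complementarity \eqref{CC}) and, equivalently, $\mathcal{N}_s w = 0$ on $\Sigma_2 \cap \{u < \varphi\}$. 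The reverse implication reverses these steps: integrating \eqref{EL1} against a test function, applying Proposition~\ref{prop:prop_1}, and setting $\lambda := -\mathcal{N}_s w|_{\Sigma_2}$ recovers \eqref{eq: KKT}. Throughout I would note $u = w + \mathcal{Z}$ with $\mathcal{Z}$ vanishing nowhere-relevant, so $u < \varphi$ and $w \leq \varphi$ (understood with $\mathcal{Z}|_{\Sigma_2}$ absorbed) are used consistently with how $\varphi$ enters $\mathcal{K}_0$.
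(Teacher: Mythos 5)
Your treatment of Part~(1) is essentially the paper's: convexity of $\mathcal{K}_0$, directional derivative, $t\downarrow 0$.

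For Part~(2) you take a genuinely different route. The paper does not construct $\lambda$ by hand; it rewrites the constrained problem via the indicator functional $I_{\mathcal{K}_0^-}$, passes to its Fenchel conjugate $I^*_{\mathbb{K}_0^+}$, forms the Lagrangian \eqref{Lagrange1}, verifies the Abadie constraint qualification $L(\mathcal{K}_0;w)\subset T(\mathcal{K}_0;w)$, and then invokes the abstract KKT existence theorem to produce $\lambda\in W^{-s,2}(\Sigma_2)$. You instead propose to define $\lambda$ directly as the residual $v\mapsto \langle f,v\rangle-\mathcal{E}(w,v)$, after showing this residual vanishes whenever $v|_{\Sigma_2}=0$. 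You correctly flag the obstruction: to make this a bounded functional \emph{on} $W^{s,2}(\Sigma_2)$ rather than on the range of the restriction map $R\colon W^{s,2}(\Omega,\Sigma_1)\to W^{s,2}(\Sigma_2)$, you need $R$ to be surjective with a bounded right inverse (equivalently, an extension property for $\Sigma_2$). The paper's standing Assumption only gives an extension operator for $\Sigma_1$, not $\Sigma_2$, so your construction is not justified by the hypotheses as stated; without surjectivity you would get $\lambda$ only on $R(W^{s,2}(\Omega,\Sigma_1))$, and a Hahn--Banach extension would neither be canonical nor obviously preserve the sign condition on $\mathcal{K}_0^-$. This is precisely the gap the paper's abstract route is designed to sidestep. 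Your converse direction (given $\lambda$, deduce the variational inequality by the decomposition $v-w=(v-\varphi)-(w-\varphi)$) matches the paper's \eqref{A1}--\eqref{A3}.

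For Part~(3) you go through \eqref{eq: KKT}, whereas the paper goes directly from \eqref{eq: VI1}, testing successively with $w\pm\zeta$ for $\zeta\in\mathcal{D}(\Omega)$, with $\psi\in\mathcal{D}(\Sigma_2)$ (plus an explicit computation reducing $\mathcal{E}(w,\psi)$ to $\int_{\Sigma_2}\psi\,\mathcal{N}_sw$), with $\psi\in\mathcal{D}(E)$ for $E=\Sigma_2\cap\{w<\varphi\}$, and with $w-\psi$, $\psi\ge 0$. Your route is cleaner in spirit, but the key step --- identifying $\lambda=-\mathcal{N}_sw|_{\Sigma_2}$ from the first line of \eqref{eq: KKT} and Proposition~\ref{prop:prop_1} --- requires $(-\Delta)^sw=f$ to hold as an equality in $W^{-s,2}(\Omega,\Sigma_1)$, i.e.\ against \emph{all} test functions $v\in W^{s,2}(\Omega,\Sigma_1)$. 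What you actually get from testing with $\mathcal{D}(\Omega)$ and $\mathcal{D}(\Sigma_2)$ is only the distributional identities in $\mathcal{D}(\Omega)^*$ and $\mathcal{D}(\Sigma_2)^*$, and $\mathrm{span}\bigl(\mathcal{D}(\Omega)\cup\mathcal{D}(\Sigma_2)\bigr)$ is not obviously dense in $W^{s,2}(\Omega,\Sigma_1)$; so the upgrade to ``$\lambda=-\mathcal{N}_sw$ in $L^2(\Sigma_2)$'' is a gap. (The remark that $(-\Delta)^sw\in L^2$ ``on both pieces'' is also not available: $f$ is merely in $W^{-s,2}(\Omega,\Sigma_1)$.) Once that identification is granted, your sign/complementarity argument via $\int_{\Sigma_2}(\varphi-w)\mathcal{N}_sw=0$ with a nonnegative integrand is correct and in fact more transparent than the paper's last step, which argues directly that the two conditions $\mathcal{N}_sw=0$ on $\Sigma_2\cap\{u<\varphi\}$ and $u\le\varphi$ on $\Sigma_2$ are equivalent to $(u-\varphi)\mathcal{N}_sw=0$.

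In short: Part~(1) matches; Part~(2) replaces the Fenchel-conjugate/constraint-qualification machinery by a direct construction of $\lambda$, which would require an unassumed extension property for $\Sigma_2$; Part~(3) relies on an unproven density/equality-in-$W^{-s,2}$ step in order to set $\lambda=-\mathcal{N}_sw$.
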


     \begin{proof}
 We proceed in three steps.

 {\bf Step 1}: Let $u\in\mathcal K$ solve the minimization problem \eqref{eq:min_prob}. Then, by Definition \ref{def:solnot}
$w:=u - \mathcal{Z}  \in \mathcal{K}_0$ solves (\ref{eq:min_prob_12}). From the convexity of $\mathcal{K}_0,$ for all $v \in \mathcal{K}_0$ and all $t \in [0,1],$ we know that $w + t(v-w) \in \mathcal{K}_0.$ As a result, for $t \in (0,1]$, we have that
  \begin{align*}
      0 \leq& \frac{1}{t}\bigg(J(w + t(v-w)) - J(w)\bigg)  \\
     = &\frac{C_{N,s}}{4t} \int\int_{\RR^{2N} \backslash (\RR^N \backslash \Omega)^2} \frac{|(w + t(v-w))(x) - (w + t(v-w))(y)|^2}{|x-y|^{N + 2s}} \ dxdy\\
     &-  \frac{C_{N,s}}{4t} \int\int_{\RR^{2N} \backslash (\RR^N \backslash \Omega)^2} \frac{|w(x) - w(y)|^2}{|x-y|^{N + 2s}} \ dxdy - \langle f, v-w \rangle_{W^{-s,2}(\Om,\Sigma_1),W^{s,2}(\Om,\Sigma_1)} \\
     =&  \frac{C_{N,s}}{2}\int\int_{\RR^{2N} \backslash (\RR^N \backslash \Omega)^2} \frac{(w(x) - w(y))((v-w)(x) - (v-w)(y)}{|x-y|^{N+2s}} \ dxdy\\
     &- \langle f, v-w\rangle_{W^{-s,2}(\Om,\Sigma_1),W^{s,2}(\Om,\Sigma_1)}\\
      &+ \frac{C_{N,s} t}{4} \int\int_{\RR^{2N} \backslash (\RR^N \backslash \Omega)^2} \frac{|(v-w)(x) - (v-w)(y)|^2}{|x-y|^{N+2s}} \ dxdy.
  \end{align*}
  Taking the limit of the preceding inequality, as $t \downarrow 0$, we get the variational inequality \eqref{eq: VI1}.

  Conversely, if $u - \mathcal{Z} = w \in \mathcal{K}_0$ satisfies \eqref{eq: VI1}, then for any $v \in \mathcal{K}_0$, we have that
  \begin{align*}
      J(v) =& J(w + (v-w)) \\
      = & \frac{C_{N,s}}{4}\int\int_{\RR^{2N} \backslash (\RR^N \backslash \Omega)^2} \frac{|w(x) - w(y)|^2}{|x-y|^{N+2s}} \ dxdy - \langle f, w \rangle_{W^{-s,2}(\Om,\Sigma_1),W^{s,2}(\Om,\Sigma_1)} \\
      &+ \frac{C_{N,s}}{2}\int\int_{\RR^{2N} \backslash (\RR^N \backslash \Omega)^2} \frac{(w(x) - w(y))((v-w)(x) - (v-w)(y))}{|x-y|^{N+2s}} \ dxdy \\
      &- \langle f, v - w \rangle_{W^{-s,2}(\Om,\Sigma_1),W^{s,2}(\Om,\Sigma_1)} \\
      &+ \frac{C_{N,s}}{4} \int\int_{\RR^{2N} \backslash (\RR^N \backslash \Omega)^2} \frac{|(v-w)(x) - (v-w)(y)|^2}{|x-y|^{N+2s}} \ dxdy \\
      \geq & J(w).
  \end{align*}
  Hence, $w\in\mathcal K_0$ is a minimizer of $J$ and we have shown that \eqref{eq: VI1} implies \eqref{eq:min_prob}. The proof of Part (a) is complete.
  
{\bf Step 2}: Now, let us consider another characterization of the solution to (\ref{eq:min_prob_12}). In particular, we are able to include the constraint $w \in \mathcal{K}_0$ into the minimization problem if we instead consider the functional 
  \begin{align}\label{eq:form_uncon_func}
      \Tilde{J}(w) =& \frac{C_{N,s}}{4} \int\int_{\RR^{2N} \backslash (\RR \backslash \Omega)^2} \frac{|w(x) - w(y)|^2}{|x-y|^{N+2s}} \ dxdy \notag\\
      &- \langle f, w \rangle_{W^{-s,2}(\Om,\Sigma_1),W^{s,2}(\Om,\Sigma_1)} + I_{\mathcal{K}_0^-}(w-\varphi),
  \end{align} 
  where  $\mathcal K_0^-$ is given in \eqref{K0} and $I_{\mathcal{K}_0^-}$ is the indicator function defined in \eqref{Indicator Func}.   
It follows from Lemma \ref{Lemma:dual_ind} that, 
  \begin{align*}
      I_{\mathbb K_0^+}^*(u) = I_{\mathcal{K}_0^{-}}(u)
  \end{align*}
  where $\mathbb K_0^+ = \{\eta \in W^{-s,2}(\Sigma_2): \eta \geq 0\}.$
  In the definition of $\mathbb K_0^+$,  by $\eta\ge 0$ we mean that $\langle\eta,v\rangle_{W^{-s,2}(\Sigma_2),W^{s,2})\Sigma_2)}\ge 0$ for all $v\in W^{s,2}(\Sigma_2)$ with $v\ge 0$ a.e. in $\Sigma_2$.
  Therefore, 
  \begin{align}
      \notag \inf_{w \in W^{s,2}(\Omega, \Sigma_1)} \Tilde{J}(w) 
     =& \notag \inf_{w \in W^{s,2}(\Omega, \Sigma_1)} \bigg(\frac{C_{N,s}}{4} \int\int_{\RR^{2N} \backslash (\RR \backslash \Omega)^2} \frac{|w(x) - w(y)|^2}{|x-y|^{N+2s}} \ dxdy\notag\\
     &- \langle f,w \rangle_{W^{-s,2}(\Om,\Sigma_1),W^{s,2}(\Om,\Sigma_1)} + I_{\mathcal{K}_0^-} (w-\varphi)\bigg) \notag\\
      = &\inf_{w \in W^{s,2}(\Omega, \Sigma_1)} \bigg(\frac{C_{N,s}}{4} \int\int_{\RR^{2N} \backslash (\RR \backslash \Omega)^2} \frac{|w(x) - w(y)|^2}{|x-y|^{N+2s}} \ dxdy\notag\\
      &- \langle f,w \rangle_{W^{-s,2}(\Om,\Sigma_1),W^{s,2}(\Om,\Sigma_1)} + I_{\mathbb K_0^+}^* (w-\varphi)\bigg) \label{IK01}.
    \end{align}
Applying the definition of $I^*_{\mathbb K_0^+}$ further shows that  the right hand side of \eqref{IK01} becomes 
 \begin{multline}\label{IK02}
      \inf_{w \in W^{s,2}(\Omega, \Sigma_1)} \bigg(\frac{C_{N,s}}{4} \int\int_{\RR^{2N} \backslash (\RR \backslash \Omega)^2} \frac{|w(x) - w(y)|^2}{|x-y|^{N+2s}} \ dxdy - \langle f,w \rangle_{W^{-s,2}(\Om,\Sigma_1),W^{s,2}(\Om,\Sigma_1)}  \\
      + \sup_{\lambda \in W^{-s,2}(\Sigma_2)} \{\langle \lambda, w - \varphi \rangle_{W^{-s,2}(\Sigma_2),W^{s,2}(\Sigma_2)} - I_{\mathbb K_0^+}(\lambda)\}\bigg).
\end{multline}

Since the supremum in \eqref{IK02} can only be reached when $\lambda \in \mathbb K_0^+$, we can deduce that 
\begin{align}
      & \notag\inf_{w \in W^{s,2}(\Omega, \Sigma_1)} \bigg(\frac{C_{N,s}}{4} \int\int_{\RR^{2N} \backslash (\RR \backslash \Omega)^2} \frac{|w(x) - w(y)|^2}{|x-y|^{N+2s}} \ dxdy\notag\\
      &- \langle f,w \rangle_{W^{-s,2}(\Om,\Sigma_1),W^{s,2}(\Om,\Sigma_1)}\notag+ \sup_{\lambda \in \mathbb K_0^+} \langle \lambda, w - \varphi \rangle_{W^{-s,2}(\Sigma_2),W^{s,2}(\Sigma_2)} \bigg) \\
      = &\inf_{w \in W^{s,2}(\Omega, \Sigma_1)}\sup_{\lambda \in \mathbb K_0^+} \bigg(\frac{C_{N,s}}{4} \int\int_{\RR^{2N} \backslash (\RR \backslash \Omega)^2} \frac{|w(x) - w(y)|^2}{|x-y|^{N+2s}} \ dxdy \notag\\
      &- \langle f,w \rangle_{W^{-s,2}(\Om,\Sigma_1),W^{s,2}(\Om,\Sigma_1)} +  \langle \lambda, w - \varphi \rangle_{W^{-s,2}(\Sigma_2),W^{s,2}(\Sigma_2)} \bigg).
  \end{align}
The above identities motivate an associated Lagrangian, given by 
  \begin{align}\label{Lagrange1}
      \mathcal{L}(u,\eta) :=& \frac{C_{N,s}}{4} \int\int_{\RR^{2N} \backslash (\RR \backslash \Omega)^2} \frac{|u(x) - u(y)|^2}{|x-y|^{N+2s}} \ dxdy \notag\\
      &- \langle f,u \rangle_{W^{-s,2}(\Om,\Sigma_1),W^{s,2}(\Om,\Sigma_1)} + \langle \eta, u - \varphi \rangle_{W^{-s,2}(\Sigma_2), W^{s,2}(\Sigma_2)}.
  \end{align}

  Next, we define the tangent cone of $\mathcal{K}_0$ at $u \in \mathcal{K}_0$ by 
  \begin{gather*}
      T(\mathcal{K}_0; u) := \Big\{\kappa \in W^{s,2}(\Omega, \Sigma_1): \; \mbox{for each} \; k \in \N, \; \exists r_k > 0, u_k \in \mathcal{K}_0\\
      : \lim_{k \rightarrow \infty} u_k = u, \lim_{k \rightarrow \infty} r_k(u_k - u) = \kappa \Big\} 
  \end{gather*}
  and the linearization cone at $u \in \mathcal K_0$ by 
  \begin{gather*}
      L(\mathcal{K}_0; u) := \Big\{rh: r>0, h \in W^{s,2}(\Omega, \Sigma_1), u+h - \varphi \in \mathcal{K}_0^-\Big\}.
  \end{gather*}
 Notice that, whenever $w \in \mathcal{K}_0$ solves (\ref{eq:min_prob_12}) we have that $L(\mathcal{K}_0; w) \subset T(\mathcal{K}_0; w).$ Indeed, suppose that $\kappa \in L(\mathcal{K}_0;w).$ Then, for some $r > 0$ and $h \in W^{s,2}(\Omega, \Sigma_1)$ we have that $\kappa = rh$ where $w + h \leq \varphi$ in $\Sigma_2.$ It follows from the convexity of $\mathcal{K}_0$ that $w + \frac{1}{k}h \in \mathcal{K}_0$ for any $k \in \N.$ Then,  choosing 
      $w_k = w + \frac{1}{k}h$ and
      $r_k = kr$, 
 we have that $\lim_{k \rightarrow \infty} w_k = w$ and 
  \begin{align*}
      \lim_{k \rightarrow \infty} r_k(w_k - w) = \lim_{k \rightarrow \infty} kr(w + \frac{1}{k}h - w) = \kappa
  \end{align*}
  so that $\kappa \in T(\mathcal{K}_0;w).$ 
  
  Then, it is well-known (see e.g. \cite{Ulbrich_Hinze}) that there exists a Lagrange multiplier, $\lambda \in W^{-s,2}(\Sigma_2)$ so that $(w, \lambda)$ satisfies the KKT conditions \eqref{eq: KKT}. It follows from the third condition in \eqref{eq: KKT} that $\lambda$ is non-negative in the sense that $\langle\lambda,\tilde v\rangle_{W^{-s,2}(\Sigma_2),W^{s,2}(\Sigma_2)}\ge 0$ for every $\tilde v\in W^{s,2}(\Sigma_2)$, $\tilde v\ge 0$ a. e. in $\Sigma_2$.

Conversely, taking $v:=v-w$ in the first identity in \eqref{eq: KKT} with $v\in\mathcal K_0$, we get that for every $v\in \mathcal K_0$,
\begin{align}\label{A1}
   \mathcal{E}(w,v-w) &-\langle f,v-w\rangle_{W^{-s,2}(\Omega,\Sigma_1),W^{s,2}(\Omega,\Sigma_1)}\notag\\
   =&- \langle \lambda, v -w\rangle_{W^{-s,2}(\Sigma_2), W^{s,2}(\Sigma_2)}\notag\\
   =&- \langle \lambda, v-\varphi+\varphi -w\rangle_{W^{-s,2}(\Sigma_2), W^{s,2}(\Sigma_2)}\notag\\
 =&- \langle \lambda, v-\varphi\rangle_{W^{-s,2}(\Sigma_2), W^{s,2}(\Sigma_2)} +  \langle \lambda, w-\varphi \rangle_{W^{-s,2}(\Sigma_2), W^{s,2}(\Sigma_2)}.
\end{align}
It follows from the last identity in \eqref{eq: KKT} that
\begin{equation}\label{A2}
 \langle \lambda, w-\varphi \rangle_{W^{-s,2}(\Sigma_2), W^{s,2}(\Sigma_2)}=0.
\end{equation}
Since $v-\varphi\in\mathcal K_0^-$, it follows from the third inequality in \eqref{eq: KKT} that 
\begin{equation}\label{A3}
- \langle \lambda, v-\varphi\rangle_{W^{-s,2}(\Sigma_2), W^{s,2}(\Sigma_2)}\ge 0.
\end{equation}
Combining \eqref{A1}, \eqref{A2} and \eqref{A3} we can deduce that
\begin{equation*}
\mathcal{E}(w,v-w) -\langle f,v-w\rangle_{W^{-s,2}(\Omega,\Sigma_1),W^{s,2}(\Omega,\Sigma_1)}\ge 0,
\end{equation*}
and we have shown \eqref{eq: VI1}.
  
 {\bf Step 3}: It remains to show Part (c). Suppose that $w: = u - \mathcal Z \in \mathcal{K}_0$ solves \eqref{eq: VI1}.
 Applying the integration by parts formula given in (\ref{Int-Part}), we can rewrite the variational inequality \eqref{eq: VI1} as follows: For all $v \in \mathcal{K}_0$,
  \begin{equation}\label{eq:VI2}
      \langle (-\Delta)^s w, v-w \rangle_{W^{-s,2}(\Omega, \Sigma_1), W^{s,2}(\Omega, \Sigma_1)} + \int_{\Sigma_2} (v-w) \mathcal{N}_s w \ dx \geq \langle f, v-w \rangle_{W^{-s,2}(\Omega, \Sigma_1), W^{s,2}(\Omega, \Sigma_1)}.
  \end{equation}
 
  Let $\zeta \in \mathcal{D}(\Omega)$ be arbitrary. It is clear that $w + \zeta \in \mathcal{K}_0$, so setting $v: = w + \zeta$ in \eqref{eq:VI2} yields 
  \begin{align*}
      \langle (-\Delta)^s w - f,   \zeta \rangle_{W^{-s,2}(\Omega, \Sigma_1), W^{s,2}(\Omega, \Sigma_1)} \geq 0.
  \end{align*}
  Since this is also true for $-\zeta$, we can deduce that
  \begin{equation}
  \langle (-\Delta)^s w - f,   \zeta \rangle_{W^{-s,2}(\Omega, \Sigma_1), W^{s,2}(\Omega, \Sigma_1)} = 0 \quad \mbox{for all} \; \zeta \in \mathcal{D}(\Omega).
  \end{equation}
  That is, $(-\Delta)^s w = f$ in $\mathcal{D}(\Omega)^*.$
  
Now, suppose that $\psi \in \mathcal{D}(\Sigma_2).$ Then, 
    \begin{align*}
        \mathcal{E}(w, \psi) &=\frac{C_{N,s}}{2} \int\int_{\RR^{2N}\backslash (\RR^N \backslash \Omega)^2} \frac{(w(x) - w(y))(\psi(x)-\psi(y))}{|x-y|^{N+2s}} \ dxdy \\
        & =\frac{C_{N,s}}{2}\bigg( - \int_{\Sigma_2}\int_{\Omega} \frac{\psi(y)(w(x) - w(y))}{|x-y|^{N+2s}} \ dxdy + \int_{\Omega} \int_{\Sigma_2} \frac{\psi(x) (w(x) - w(y))}{|x-y|^{N+2s}} \ dxdy\bigg) \\
        & = C_{N,s} \int_{\Sigma_2} \psi(x)\bigg(\int_{\Omega} \frac{w(x) - w(y)}{|x-y|^{N+2s}} \ dy \bigg) \ dx\\
        &= \int_{\Sigma_2} \psi(x) \mathcal{N}_sw(x) \ dx.
    \end{align*}
 Combining this with (\ref{Int-Part}) shows that 
    \begin{equation}\label{EqinSigma2}
        \langle (-\Delta)^sw, \psi \rangle_{W^{-s,2}(\Om,\Sigma_1),W^{s,2}(\Om,\Sigma_1)} = 0 \qquad \mbox{for all} \; \psi \in \mathcal{D}(\Sigma_2)
    \end{equation}
    so that $(-\Delta)^s w = 0$ in $\mathcal{D}(\Sigma_2)^*.$

  Now, consider the set $E := \{x \in \Sigma_2: w(x) < \varphi(x)\}.$ Suppose that $\psi \in\mathcal{D}(E),$ the space of test functions in $E.$ For sufficiently small $\epsilon > 0$ we have that $v: = w + \epsilon \psi \in \mathcal{K}_0.$ Our variational inequality, along with (\ref{EqinSigma2}) yields 
  \begin{align*}
      \epsilon \int_{\Sigma_2} \psi \mathcal{N}_sw \ dx \geq \epsilon \langle f, \psi \rangle_{W^{-s,2}(\Omega, \Sigma_1), W^{s.2}(\Omega, \Sigma_1)}.
  \end{align*}
  This is also true for $-\psi$ so, 
  \begin{equation}\label{ss}
      \int_{\Sigma_2} \psi \mathcal{N}_s w \ dx = \langle f, \psi \rangle_{W^{-s,2}(\Omega, \Sigma_1), W^{s.2}(\Omega, \Sigma_1)} \qquad \mbox{for all} \; \psi \in \mathcal{D}(E).
  \end{equation}

Since $f$ is a continuous linear functional on $W^{s,2}(\Omega, \Sigma_1)$, from the Riesz representation theorem there exists a unique $\Tilde{f} \in W^{s,2}(\Omega, \Sigma_1)$ such that for every $\psi\in\mathcal D(E)$,
    \begin{align}\label{D1}
      \langle f, \psi \rangle_{W^{-s,2}(\Omega, \Sigma_1), W^{s,2}(\Omega, \Sigma_1)} = &(\tilde f,\psi)_{W^{s,2}(\Omega,\Sigma_1)}\notag\\
      =&
      \int\int_{\RR^{2N}\backslash (\RR^N \backslash \Omega)^2} \frac{(\Tilde{f}(x) - \Tilde{f}(y))(\psi(x) - \psi(y))}{|x-y|^{N+2s}} \ dxdy.
  \end{align}
 Since $\psi = 0$ outside of $\Sigma_2$ and  $\RR^{2N}\backslash (\RR^N \backslash \Omega)^2 = (\Omega \times \Omega) \cup ( \Omega \times \RR^N \backslash \Omega)) \cup ((\RR^N \backslash \Omega) \times \Omega)$, we have that the identity \eqref{D1} reduces to the following:
  \begin{align}\label{mm}
  & \int\int_{\RR^{2N}\backslash (\RR^N \backslash \Omega)^2} \frac{(\Tilde{f}(x) - \Tilde{f}(y))(\psi(x) - \psi(y))}{|x-y|^{N+2s}} \ dxdy\notag\\
      =&\int_{\Omega} \int_{\Sigma_2} \frac{\psi(x)(\Tilde{f}(x) - \Tilde{f}(y))}{|x-y|^{N+2s}} \ dxdy + \int_{\Sigma_2} \int_{\Omega} \frac{-\psi(y) (\Tilde{f}(x) - \Tilde{f}(y))}{|x-y|^{N+2s}} \ dxdy \notag \\
      = &\int_{\Sigma_2}\int_{\Omega} \frac{\psi(x)(\Tilde{f}(x) - \Tilde{f}(y))}{|x-y|^{N+2s}} \ dy dx + \int_{\Sigma_2}\int_{\Omega} \frac{-\psi(x)(\Tilde{f}(y) - \Tilde{f}(x))}{|x-y|^{N+2s}} \ dydx \notag \\
       =&2 \int_{\Sigma_2}\int_{\Omega} \frac{\psi(x)(\Tilde{f}(x) - \Tilde{f}(y))}{|x-y|^{N+2s}} \ dydx\notag\\
      =&2\int_{\Sigma_2}\psi(x)\mathcal N_s(\tilde f)(x)\;dx.
  \end{align}
It follows from \eqref{ss}, \eqref{D1}  and \eqref{mm} that
  \begin{align}
      \int_{\Sigma_2} \psi(x) \bigg(\mathcal{N}_s w(x) - 2\mathcal N_s(\tilde f)(x)\bigg) dx = 0
  \end{align}
  for all $\psi \in \mathcal{D}(E).$ We can deduce from the fundamental lemma of calculus of variations that
  \begin{equation}\label{D2}
      \mathcal{N}_s w = 2 \mathcal N_s(\tilde f)\;\mbox{ in } \Sigma_2,
  \end{equation}
  whenever $w< \varphi$ in $\Sigma_2.$
  
 Next, since $W^{s,2}(\Omega, \Sigma_1)$ is a linear subspace of $L^2(\Omega)$ it follows from the Hahn Banach Theorem that, there exists a linear functional $\hat{f}\in (L^2(\Omega))^\star=L^2(\Omega)$ such that for every $\psi\in\mathcal D(E)$,
 \begin{equation}\label{jj}
  \langle f, \psi \rangle_{W^{-s,2}(\Omega, \Sigma_1), W^{s,2}(\Omega, \Sigma_1)}=(\hat f,\psi)_{L^2(\Omega)}  .  
 \end{equation}
 It follows from \eqref{D1}, \eqref{mm} and \eqref{jj} that for every $\psi\in\mathcal D(E)$, 
 \begin{align*}
  2\int_{\Sigma_2}\psi(x)\mathcal N_s(\tilde f)(x)\;dx=\int_{\Omega}\psi(x)\hat f(x)\;dx=0   
 \end{align*}
 where we have also used that $\operatorname{supp}[\psi]\subset \Sigma_2$. It follows from the fundamental lemma of the calculus of variation that
 $$\mathcal N_s(\tilde f)=0\;\;\mbox{ in }\Sigma_2.$$
This fact together with \eqref{D2} implies that
$$\mathcal N_sw=0\;\;\mbox{ in }\{x \in \Sigma_2: u(x) < \varphi(x)\}.$$

  Now, suppose that $\psi \in \mathcal{D}(\Sigma_2)$ with $\psi \geq 0$. Substituting $v: = w - \psi$ into (\ref{eq: VI1}), we can see that 
  \begin{equation}\label{eq:PP}
      \int_{\Sigma_2} \psi \mathcal{N}_s w \ dx \leq \langle f, \psi \rangle_{W^{-s,2}(\Omega, \Sigma_1), W^{s,2}(\Omega, \Sigma_1)}
  \end{equation}
  for all non-negative test functions defined on $\Sigma_2.$ Since $\psi \in \mathcal{D}(\Sigma_2)$, the right-hand-side in \eqref{eq:PP} vanishes. Therefore,
  \begin{align*}
      &\int_{\Sigma_2} \psi \mathcal{N}_s w \ dx \leq 0
  \end{align*}
  for all $\psi \in \mathcal{D}(\Sigma_2)$ with $\psi \geq 0$. As a result, 
  \begin{align*}
      & \mathcal{N}_s w \leq 0 \quad \mbox{in} \; \Sigma_2.
  \end{align*}

  Conversely, suppose that $w$ satisfies \eqref{EL1}. Then, 
  \begin{align*}
     \mathcal{E}(w, v -w) =& \langle (-\Delta)^sw, v-w \rangle_{W^{-s,2}(\Omega, \Sigma_1), W^{s,2}(\Omega, \Sigma_1)} + \int_{\Sigma_2} (v-w) \mathcal{N}_s w \ dx \\
     =  & \langle f, v - w \rangle_{W^{-s,2}(\Omega, \Sigma_1), W^{s,2}(\Omega, \Sigma_1)} + \int_{\{x \in \Sigma_2: w(x) < \varphi(x)\}} (v-w) \mathcal{N}_s w \ dx \\
     &+ \int_{\{x \in \Sigma_2: w(x) = \varphi}(x)\} (v - \varphi) \mathcal{N}_s w \ dx \\
     \geq &\langle f, v - w \rangle_{W^{-s,2}(\Omega, \Sigma_1), W^{s,2}(\Omega, \Sigma_1)}
  \end{align*}
  for all $v \in \mathcal{K}_0$. 
  
  It remains to show the last assertion of the proposition. Indeed, since $\mathcal N_sw=0$ in $\Sigma_2\cap\{u<\varphi\}$ and $u\le\varphi$ in $\Sigma_2$, it follows that $(u-\varphi)\mathcal N_sw=0$ in $\Sigma_2$. We have shown that the last two conditions in \eqref{EL1} implies \eqref{CC}. Now, assume that \eqref{CC} holds. This implies that  $u-\varphi=0$ in $\Sigma_2$ or $\mathcal N_sw=0$ in $\Sigma_2$. This trivially implies that $\mathcal N_sw=0$ in $\Sigma_2\cap\{u<\varphi\}$ and $u\le\varphi$ in $\Sigma_2$. The proof is finished.
  \end{proof}

\section{Penalization} \label{Penalization}

We now consider a variety of penalty formulations, whose purpose is to incorporate the constraint into our minimization problem and approximate our original formulation by a sequence of Fr\'echet differentiable functionals. We begin this section by analyzing a Moreau-Yosida type penalty formulation in $L^2(\Sigma_2)$, namely
%
        \begin{equation}\label{eq:PL2Sig2}
           \min_{W^{s,2}(\Omega,\Sigma_1)} J_{\epsilon}(w) := \frac{1}{2}\mathcal E(w,w) - \langle f, w \rangle_{W^{-s,2}(\Omega,\Sigma_1), W^{s,2}(\Omega,\Sigma_1)} + \frac{\epsilon^{-2}}{2} \int_{\Sigma_2} [(w - \varphi)^+]^2 \ dx 
        \end{equation}
where $\epsilon > 0$ is the penalty parameter and, for $u \in W^{s,2}(\Sigma_2)$, we denote the positive part of $u$ as $u^+ := \max \{u, 0\}.$ Further, we denote the negative part of $u$ to be $u^- := \min \{u,0\}$ and notice that $u = u^+ + u^-.$ 
        
        As before, the direct method of the calculus of variations ensures a unique minimizer to $J_{\epsilon}$, denoted by $w_{\epsilon}.$ 
        We have the following convergence result.
           \begin{proposition}\label{prop:mosco}
            For every $\epsilon > 0,$ there exists a solution $w_{\epsilon}$ to \eqref{eq:PL2Sig2}. Additionally, there exists a subsequence, that we still denote by $(w_{\epsilon})$,  of solutions that converges weakly to $w \in \mathcal K_0$, as $\epsilon\downarrow 0$,  so that $ J_{\epsilon} \xrightarrow[]{M} \Tilde{J}$ (in the sense of Mosco), as $\epsilon\downarrow 0$.
        \end{proposition}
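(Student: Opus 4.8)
The plan is to carry out the argument in three stages: (i) existence and uniqueness of the penalized minimizer $w_\epsilon$ for each fixed $\epsilon>0$; (ii) a uniform-in-$\epsilon$ a priori estimate from which one extracts a weakly convergent subsequence whose limit lies in $\mathcal{K}_0$; and (iii) a direct verification of the two Mosco conditions for the family $(J_\epsilon)$ against $\tilde J$ from \eqref{eq:form_uncon_func}. For (i) I would repeat the reasoning behind the unconstrained well-posedness theorem: by Proposition~\ref{prop:Gag} the term $\frac12\mathcal{E}(w,w)=\frac{C_{N,s}}{4}\|w\|_{W^{s,2}(\Omega,\Sigma_1)}^2$ controls an equivalent norm, the linear term is continuous, and the penalty $\frac{\epsilon^{-2}}{2}\int_{\Sigma_2}[(w-\varphi)^+]^2\,dx$ is nonnegative, convex and strongly continuous — because $t\mapsto[(t-\varphi)^+]^2$ is convex, $v\mapsto(v-\varphi)^+$ is $1$-Lipschitz on $L^2(\Sigma_2)$, and the restriction $W^{s,2}(\Omega,\Sigma_1)\to L^2(\Sigma_2)$ is bounded since $\|v\|_{L^2(\Sigma_2)}\le\|v\|_{L^2(\RR^N)}$. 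Hence $J_\epsilon$ is coercive, strictly convex and weakly lower semicontinuous, and the direct method yields the unique minimizer $w_\epsilon$.

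For (ii) I would fix any $v_0\in\mathcal{K}_0$ (so $(v_0-\varphi)^+=0$ in $\Sigma_2$), use $J_\epsilon(w_\epsilon)\le J_\epsilon(v_0)=\frac12\mathcal{E}(v_0,v_0)-\langle f,v_0\rangle=:C_0$, drop the nonnegative penalty term, and bound $\langle f,w_\epsilon\rangle$ by $\|f\|\,\|w_\epsilon\|_{W^{s,2}(\Omega,\Sigma_1)}$ to obtain a uniform bound on $\|w_\epsilon\|_{W^{s,2}(\Omega,\Sigma_1)}$; feeding this back into $J_\epsilon(w_\epsilon)\le C_0$ also gives $\int_{\Sigma_2}[(w_\epsilon-\varphi)^+]^2\,dx\le C\epsilon^2\to0$. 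Then I would extract a subsequence with $w_\epsilon\rightharpoonup w$; weak closedness of the subspace $W^{s,2}(\Omega,\Sigma_1)\subset W^{s,2}(\RR^N)$ forces $w=0$ in $\Sigma_1$, and weak sequential lower semicontinuity of the convex continuous functional $v\mapsto\|(v-\varphi)^+\|_{L^2(\Sigma_2)}$, together with $\|(w_\epsilon-\varphi)^+\|_{L^2(\Sigma_2)}\to0$, forces $\|(w-\varphi)^+\|_{L^2(\Sigma_2)}=0$, that is $w\le\varphi$ a.e. in $\Sigma_2$; hence $w\in\mathcal{K}_0$.

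For (iii), recalling that $\tilde J(v)=\frac12\mathcal{E}(v,v)-\langle f,v\rangle$ when $v\le\varphi$ in $\Sigma_2$ and $+\infty$ otherwise, I would verify the two conditions directly. For the liminf condition, let $\epsilon_n\downarrow0$ and $w_n\rightharpoonup w$ in $W^{s,2}(\Omega,\Sigma_1)$; the sequence is bounded, $\langle f,w_n\rangle\to\langle f,w\rangle$, and weak lower semicontinuity of $v\mapsto\frac12\mathcal{E}(v,v)$ and of $v\mapsto\|(v-\varphi)^+\|_{L^2(\Sigma_2)}^2$ gives, if $w\in\mathcal{K}_0$, $\liminf_n J_{\epsilon_n}(w_n)\ge\frac12\mathcal{E}(w,w)-\langle f,w\rangle=\tilde J(w)$ after dropping the nonnegative penalty term, while if $w\notin\mathcal{K}_0$ then $\|(w-\varphi)^+\|_{L^2(\Sigma_2)}^2>0$ and $\epsilon_n^{-2}\to\infty$ force $J_{\epsilon_n}(w_n)\to+\infty=\tilde J(w)$. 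For the recovery condition, given $w\in W^{s,2}(\Omega,\Sigma_1)$ the constant sequence $w_n\equiv w$ works: it converges strongly, $J_{\epsilon_n}(w)=\tilde J(w)$ for every $n$ when $w\in\mathcal{K}_0$, and $\limsup_n J_{\epsilon_n}(w)\le+\infty=\tilde J(w)$ is trivial when $w\notin\mathcal{K}_0$. This gives $J_\epsilon\xrightarrow{M}\tilde J$.

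The step I expect to be the main (though still mild) obstacle is the weak sequential lower semicontinuity of $v\mapsto\int_{\Sigma_2}[(v-\varphi(x))^+]^2\,dx$ along weakly convergent sequences in $W^{s,2}(\Omega,\Sigma_1)$ — morally, that the constraint $w\le\varphi$ survives passage to the weak limit — which I would justify from the convexity of $t\mapsto[(t-\varphi(x))^+]^2$ and the boundedness of the restriction into $L^2(\Sigma_2)$; the remaining ingredients are routine applications of the direct method and of the definition of Mosco convergence. I would also remark that strict convexity of $\tilde J$ on $\mathcal{K}_0$ makes its minimizer unique, so the standard stability of minimizers under Mosco convergence identifies $w$ with that minimizer and upgrades the convergence from a subsequence to the whole family $(w_\epsilon)$.
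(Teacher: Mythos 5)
Your proposal follows the same overall route as the paper (direct method for existence, a uniform a priori bound by comparing with a fixed admissible $v_0\in\mathcal K_0$, weak compactness, weak lower semicontinuity of the penalty to force the limit into $\mathcal K_0$, and the constant recovery sequence), so it is essentially the paper's argument. Two points are worth noting, though, because your version is actually tighter on the Mosco step. First, the paper's verification of the Mosco liminf inequality \eqref{eq:mosco} is carried out only along the particular sequence of penalized minimizers $(w_\epsilon)$ and silently restricts to the case $w\in\mathcal K_0$, whereas the definition of Mosco convergence requires the inequality $\liminf_n J_{\epsilon_n}(w_n)\ge\tilde J(w)$ for \emph{every} weakly convergent sequence $w_n\rightharpoonup w$ in $W^{s,2}(\Omega,\Sigma_1)$; you check both cases $w\in\mathcal K_0$ and $w\notin\mathcal K_0$, correctly using that weak lower semicontinuity of $v\mapsto\|(v-\varphi)^+\|_{L^2(\Sigma_2)}^2$ together with $\epsilon_n^{-2}\to\infty$ blows up $J_{\epsilon_n}(w_n)$ when $w\notin\mathcal K_0$. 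Second, your closing remark that uniqueness of the minimizer of $\tilde J$ over $\mathcal K_0$, combined with the standard stability of minimizers under Mosco convergence, promotes the subsequential convergence to convergence of the entire family $(w_\epsilon)$ is a small improvement the paper does not state (it keeps the conclusion at the subsequence level). Your justification of the weak lower semicontinuity of the penalty via convexity of $t\mapsto[(t-\varphi)^+]^2$, $1$-Lipschitz continuity of $v\mapsto (v-\varphi)^+$ on $L^2$, and boundedness of the restriction $W^{s,2}(\Omega,\Sigma_1)\to L^2(\Sigma_2)$ is precisely the right underpinning for the step the paper takes for granted. In short, same strategy, slightly more careful execution.
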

        
        \begin{proof}
             For each $\epsilon>0$ and the minimization problem corresponding to $J_\epsilon$, consider the resulting sequence of solutions $(w_\epsilon)_{\epsilon>0}$.
             From the coercivity of $J$, there exists a constant $C > 0$ independent of $\epsilon$ so that $\lVert w_{\epsilon} \rVert_{W^{s,2}(\Omega, \Sigma_1)} \leq C.$ Then, there is a subsequence that we still denote by $(w_{{\epsilon}})_\epsilon$ that converges weakly to $w \in W^{s,2}(\Omega, \Sigma_1)$, as $\epsilon\downarrow 0.$ From the weak lower semi-continuity of $J,$ we have that
        \begin{equation}\label{eq:mosco}
            J(w) \leq \liminf_{\epsilon\downarrow 0} J(w_{\epsilon}) \leq \liminf_{\epsilon \downarrow 0} \left(J(w_{\epsilon}) + \frac{\epsilon^{-2}}{2} \int_{\Sigma_2} [(w_{\epsilon} - \varphi)^+]^2\;dx\right) = \liminf_{\epsilon\downarrow 0} J_{\epsilon}(w_{\epsilon}).
        \end{equation} 
        Further, we claim that the weak limit, $w$, belongs to $\mathcal K_0.$ Indeed, let $v \in \mathcal K_0$ be fixed.  Then, from the optimality of $w_\epsilon$ it follows that
        \begin{equation}\label{JJJ}
            J(w_{\epsilon}) + \frac{\epsilon^{-2}}{2} \int_{\Sigma_2} [(w_{\epsilon} - \varphi)^+]^2 \ dx  \leq 
            J(v) + \frac{\epsilon^{-2}}{2} \int_{\Sigma_2} [(v - \varphi)^+]^2\;dx = J(v),
        \end{equation}
        where we have used the fact that $(v-\varphi)^+=0$ in $\Sigma_2$. It follows from \eqref{JJJ}
     that 
        \begin{equation*}
            \int_{\Sigma_2} [(w_{\epsilon} - \varphi)^+]^2 \ dx  \leq 2\epsilon^2(J(v) - J(w_{\epsilon})) \leq C\epsilon^2.
        \end{equation*}
 Then, as a result of weak lower semi-continuity, we have that
        \begin{equation*}
            \int_{\Sigma_2} [(w - \varphi)^+]^2 \leq \liminf_{\epsilon\downarrow 0} \int_{\Sigma_2} [(w_{\epsilon} - \varphi)^+]^2 = 0
        \end{equation*}
        so that $w \in \mathcal K_0$, and the claim is proved.
        
    Further, from \eqref{eq:mosco} we have that
    $$
        J(w) \leq \liminf_{\epsilon\downarrow 0} J_{\epsilon}(w_{\epsilon}). 
    $$
        
Now, if $w \in W^{s,2}(\Omega, \Sigma_1),$ choosing the constant sequence $(w)_{\epsilon > 0}$ gives us that 
        
    $$
        \limsup_{\epsilon \downarrow 0} J_{\epsilon}(w) = J(w)
    $$ 
    and so we have the convergence in the sense of Mosco.  The proof is finished.
        \end{proof}            

        \begin{remark}
        {\em 
        We observe that generally,  $w_{\epsilon} \in W^{s,2}(\Omega, \Sigma_1)$ will fail to satisfy $w_{\epsilon} \leq \varphi$ in $\Sigma_2$. Therefore, it is necessary to estimate the error created by this penalization. Notice that Proposition~\ref{prop:mosco} establishes convergence. 
        }
        \end{remark}
    
    The next result shows a rate of convergence with respect to $\epsilon$.
  
        \begin{theorem}\label{thm:penal}
            The unique minimizer $w_{\epsilon} \in W^{s,2}(\Omega, \Sigma_1)$ of the penalized functional $J_{\epsilon}$ satisfies 
            \begin{equation}\label{pe-eq}
            C_{N,s} \lVert w - w_{\epsilon} \rVert^2_{W^{s,2}(\Omega, \Sigma_1)} + \frac {\epsilon^{-2}}{2}\lVert (w_{\epsilon} - \varphi)^+ \rVert_{L^2(\Sigma_2)}^2 \leq \epsilon^2 \lVert \mathcal{N}_s w\rVert^2_{L^2(\Sigma_2)},
            \end{equation}
            where we recall that the operator $\mathcal N_s$ is given in \eqref{NLND}.
        \end{theorem}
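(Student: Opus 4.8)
The plan is to test the variational characterizations of $w$ and $w_\epsilon$ against each other and exploit the monotonicity of $v\mapsto v^+$. First I would record the Euler--Lagrange equation for the penalized problem: since $J_\epsilon$ is Fr\'echet differentiable on $W^{s,2}(\Omega,\Sigma_1)$, its unique minimizer $w_\epsilon$ satisfies
\begin{equation*}
  \mathcal E(w_\epsilon,v) - \langle f,v\rangle_{W^{-s,2}(\Omega,\Sigma_1),W^{s,2}(\Omega,\Sigma_1)} + \epsilon^{-2}\int_{\Sigma_2}(w_\epsilon-\varphi)^+\,v\,dx = 0
\end{equation*}
for all $v\in W^{s,2}(\Omega,\Sigma_1)$. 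Next I would recall from Proposition~\ref{prop:equiv}(c) (equivalently the KKT system in Proposition~\ref{prop:equiv}(b)) that the true solution $w=u-\mathcal Z$ satisfies $(-\Delta)^s w = f$ in $\Omega$, $\mathcal N_s w \le 0$ in $\Sigma_2$, and the complementarity relation $(w-\varphi)\mathcal N_s w = 0$ in $\Sigma_2$; via the integration by parts formula \eqref{Int-Part} this gives, for every $v\in W^{s,2}(\Omega,\Sigma_1)$,
\begin{equation*}
  \mathcal E(w,v) - \langle f,v\rangle_{W^{-s,2}(\Omega,\Sigma_1),W^{s,2}(\Omega,\Sigma_1)} = \int_{\Sigma_2} v\,\mathcal N_s w\,dx.
\end{equation*}

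Then I would subtract the two identities after inserting the test function $v = w_\epsilon - w$ (which lies in $W^{s,2}(\Omega,\Sigma_1)$ since both functions vanish in $\Sigma_1$), obtaining
\begin{equation*}
  \mathcal E(w_\epsilon-w,\,w_\epsilon-w) + \epsilon^{-2}\int_{\Sigma_2}(w_\epsilon-\varphi)^+(w_\epsilon-w)\,dx = -\int_{\Sigma_2}(w_\epsilon-w)\,\mathcal N_s w\,dx.
\end{equation*}
Using $\mathcal E(v,v) = \tfrac{C_{N,s}}{2}\|v\|_{W^{s,2}(\Omega,\Sigma_1)}^2$, the left side already produces the first term in \eqref{pe-eq} up to a harmless factor. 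For the penalty term I would write $w_\epsilon - w = (w_\epsilon-\varphi) - (w-\varphi)$ and use the pointwise inequality $(a^+)(a-b) \ge (a^+)^2 - (a^+)(b^+) \ge (a^+)^2$ whenever $b\le 0$: since $w\le\varphi$ in $\Sigma_2$ (as $w\in\mathcal K_0$), we have $w-\varphi\le 0$ there, so $(w_\epsilon-\varphi)^+(w_\epsilon-w) \ge [(w_\epsilon-\varphi)^+]^2$ pointwise; this yields the second term on the left of \eqref{pe-eq}.

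For the right-hand side I would use the complementarity structure: split $\Sigma_2$ into $\{w=\varphi\}$ and $\{w<\varphi\}$. On $\{w<\varphi\}$ we have $\mathcal N_s w = 0$, so that part contributes nothing. On $\{w=\varphi\}$ we have $w_\epsilon - w = w_\epsilon-\varphi \le (w_\epsilon-\varphi)^+$ and $\mathcal N_s w\le 0$, hence $-(w_\epsilon-w)\mathcal N_s w \le -(w_\epsilon-\varphi)^+\mathcal N_s w = (w_\epsilon-\varphi)^+|\mathcal N_s w|$. Therefore the right side is bounded by $\int_{\Sigma_2}(w_\epsilon-\varphi)^+|\mathcal N_s w|\,dx$, which by Cauchy--Schwarz is at most $\|(w_\epsilon-\varphi)^+\|_{L^2(\Sigma_2)}\|\mathcal N_s w\|_{L^2(\Sigma_2)}$ (this last norm is finite by Lemma~\ref{lem:Nmap}). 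Finally I would apply Young's inequality $ab \le \tfrac{\epsilon^{-2}}{2}a^2 + \tfrac{\epsilon^2}{2}b^2$ with $a = \|(w_\epsilon-\varphi)^+\|_{L^2(\Sigma_2)}$ and $b = \|\mathcal N_s w\|_{L^2(\Sigma_2)}$, absorbing the $\tfrac{\epsilon^{-2}}{2}a^2$ term partially into the left-hand side and collecting constants to arrive at \eqref{pe-eq}. The main obstacle I anticipate is handling the sign bookkeeping on the right-hand side carefully — in particular making sure the complementarity condition is used on exactly the right set and that the inequality $w_\epsilon - w \le (w_\epsilon-\varphi)^+$ is only invoked where $w=\varphi$ — and tracking the constant $C_{N,s}$ through $\mathcal E$; the rest is routine.
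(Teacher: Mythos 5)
Your proposal is correct and follows essentially the same route as the paper: form the Euler--Lagrange equation for $w_\epsilon$, test against $w_\epsilon-w$, use the coercivity of $\mathcal E$, bound the penalty term via the pointwise inequality $(w_\epsilon-\varphi)^+(w_\epsilon-w)\ge[(w_\epsilon-\varphi)^+]^2$ (valid since $w\le\varphi$ in $\Sigma_2$), control the remaining boundary term using the complementarity system and $\mathcal N_s w\le 0$, and finish with Cauchy--Schwarz and Young. The only cosmetic difference is that you handle the complementarity by splitting $\Sigma_2$ into $\{w=\varphi\}$ and $\{w<\varphi\}$, whereas the paper inserts the integrated relation $\int_{\Sigma_2}(w-\varphi)\mathcal N_s w\,dx=0$ and then drops the $(w_\epsilon-\varphi)^-$ contribution; both yield $-\int_{\Sigma_2}(w_\epsilon-\varphi)^+\mathcal N_s w\,dx$ as the surviving upper bound.
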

        
        \begin{proof}
        For any specified $\epsilon > 0$, the minimizer $w_\epsilon$ of \eqref{eq:PL2Sig2} satisfies 
        \begin{equation*}
            \mathcal{E}(w_{\epsilon}, v) + \epsilon^{-2}\int_{\Sigma_2} (w_{\epsilon}-\varphi)^+ v \ dx = \langle f, v \rangle_{W^{-s,2}(\Omega, \Sigma_1), W^{s,2}(\Omega, \Sigma_1)}  \mbox{ for all} \; v \in W^{s,2}(\Omega, \Sigma_1).
        \end{equation*}

        Applying the integration by parts formula given in \eqref{Int-Part}, we obtain that
        \begin{align}\label{eq: EL-PL2}
            \langle (-\Delta)^s w_{\epsilon}, v \rangle_{W^{-s,2}(\Omega, \Sigma_1), W^{s,2}(\Omega, \Sigma_1)} &+ \int_{\Sigma_2} v \mathcal{N}_s w_{\epsilon} \ dx + \epsilon^{-2}\int_{\Sigma_2} (w_{\epsilon}-\varphi)^+ v \ dx \notag\\ 
            =& \langle f, v \rangle_{W^{-s,2}(\Omega, \Sigma_1), W^{s,2}(\Omega, \Sigma_1)}
        \end{align}
        for all $v \in W^{s,2}(\Omega, \Sigma_1).$

        Further, from \eqref{Int-Part} and \eqref{eq: EL-PL2}, taking $v:=w-w_\epsilon$ as a  test function, we obtain that,
        \begin{align}
            & \notag \frac{C_{N,s}}{2}\lVert w - w_{\epsilon} \rVert^2_{W^{s,2}(\Omega, \Sigma_1)} \\
            \notag =& \langle (-\Delta)^s(w - w_{\epsilon}), w-w_{\epsilon} \rangle_{W^{-s,2}(\Omega, \Sigma_1), W^{s,2}(\Omega, \Sigma_1)} + \int_{\Sigma_2} (w - w_{\epsilon})\mathcal{N}_s(w-w_{\epsilon}) \ dx \\
             \notag = &\langle (-\Delta)^s w , w-w_{\epsilon} \rangle_{W^{-s,2}(\Omega, \Sigma_1), W^{s,2}(\Omega, \Sigma_1)} + \int_{\Sigma_2} (w - w_{\epsilon})\mathcal{N}_s w \ dx \\
            & +\epsilon^{-2} \int_{\Sigma_2} (w_{\epsilon} - \varphi)^+ (w - w_{\epsilon}) \ dx - \langle f, w - w_{\epsilon} \rangle_{W^{-s,2}(\Omega, \Sigma_1), W^{s,2}(\Omega, \Sigma_1)}. \label{P1}
        \end{align}
The first two identities in \eqref{EL1} implies that
        \begin{equation}\label{bb-1}
        \langle (-\Delta)^s w , w-w_{\epsilon} \rangle_{W^{-s,2}(\Omega, \Sigma_1), W^{s,2}(\Omega, \Sigma_1)}   - \langle f, w - w_{\epsilon} \rangle_{W^{-s,2}(\Omega, \Sigma_1), W^{s,2}(\Omega, \Sigma_1)}=0.
        \end{equation}
       Since
        \begin{equation*}
            \int_{\Sigma_2}(w-\varphi)\mathcal N_sw\;dx=0,
        \end{equation*}
        then using \eqref{bb-1} we see that \eqref{P1} becomes 
        \begin{align}\label{P2}
        \frac{C_{N,s}}{2}\lVert w - w_{\epsilon} \rVert^2_{W^{s,2}(\Omega, \Sigma_1)}=     -\int_{\Sigma_2} (w_{\epsilon} - \varphi) \mathcal{N}_s w \ dx + \epsilon^{-2}\int_{\Sigma_2} (w_{\epsilon} - \varphi)^+(w-w_{\epsilon}) \ dx.
        \end{align}

       Since $w \leq \varphi$ in $\Sigma_2$, we have that
        \begin{align}
            & \notag \epsilon^{-2} \int_{\Sigma_2} (w_{\epsilon} - \varphi)^+(w-w_{\epsilon}) \ dx \\
            & \notag = -\epsilon^{-2}\int_{\Sigma_2}(w_{\epsilon} - \varphi)^+(w_{\epsilon} - \varphi) \ dx - \epsilon^{-2}\int_{\Sigma_2}(w_{\epsilon} - \varphi)^+(\varphi - w) \ dx \\
            & \leq -\epsilon^{-2} \int_{\Sigma_2} (w_{\epsilon} - \varphi)^+(w_{\epsilon} - \varphi) \ dx. \label{P3}
        \end{align}

        Since $(w_{\epsilon} - \varphi)^+(w_{\epsilon} - \varphi)^- = 0,$ the right-hand-side in \eqref{P3} becomes,
        \begin{align}
            & \notag -\epsilon^{-2} \int_{\Sigma_2}[(w_{\epsilon} - \varphi)^+]^2 \ dx - \epsilon^{-2} \int_{\Sigma_2} (w_{\epsilon} - \varphi)^+(w_{\epsilon} - \varphi)^- \ dx \\
            & = -\epsilon^{-2} \int_{\Sigma_2}[(w_{\epsilon} - \varphi)^+]^2 \ dx. \label{P4}
        \end{align}

        Combining \eqref{P1}-\eqref{P4} and recalling that $\mathcal{N}_sw \leq 0$ in $\Sigma_2$, we get
        \begin{align}
             \notag \frac{C_{N,s}}{2}\lVert w - w_{\epsilon} \rVert^2_{W^{s,2}(\Omega, \Sigma_1)} \leq& -\int_{\Sigma_2} (w_{\epsilon} - \varphi) \mathcal{N}_s w \ dx - \epsilon^{-2} \int_{\Sigma_2} [(w_{\epsilon} - \varphi)^+]^2 \ dx \\
             \notag = &- \int_{\Sigma_2} (w_{\epsilon} - \varphi)^+ \mathcal{N}_s w \ dx - \int_{\Sigma_2} (w_{\epsilon} - \varphi)^- \mathcal{N}_s w \ dx \notag\\
            &- \epsilon^{-2} \int_{\Sigma_2} [(w_{\epsilon} - \varphi)^+]^2 \ dx\notag \\
            \leq& - \int_{\Sigma_2} (w_{\epsilon} - \varphi)^+ \mathcal{N}_s w \ dx - \epsilon^{-2} \int_{\Sigma_2} [(w_{\epsilon} - \varphi)^+]^2 \ dx. \label{P5}
        \end{align}
        This implies that
        \begin{align}
            \notag\frac{C_{N,s}}{2}\lVert w - w_{\epsilon} \rVert^2_{W^{s,2}(\Omega, \Sigma_1)} +
            \epsilon^{-2} \| (w_{\epsilon} - \varphi)^+\|_{L^2(\Sigma_2)}^2 \ dx
            \leq - \int_{\Sigma_2} (w_{\epsilon} - \varphi)^+ \mathcal{N}_s w \ dx \\
            \le \frac{\epsilon^{-2}}{2}\lVert (w_{\epsilon} - \varphi)^+ \rVert^2_{L^2(\Sigma_2)} + \frac{\epsilon^2}{2}\lVert \mathcal{N}_s w \rVert^2_{L^2(\Sigma_2)}
            \label{eq:PP0}
        \end{align}
        where in the last step we have used the H\"older's inequality and the generalized Young's inequality. We have shown \eqref{pe-eq} and        
        the proof  is finished.
        \end{proof}
                
         In the above penalized formulation (cf.~\eqref{eq:PL2Sig2}) and subsequently in Theorem~\ref{thm:penal}, we consider a penalization in $L^2(\Sigma_2)$ norm. Next, we instead consider a penalization in $W^{s,2}(\Sigma_2)$-norm. The advantage being that the optimality conditions for the penalized problem can be directly related to the original optimality system \eqref{eq: KKT}.
        
        Motivated by Kikuchi and Oden \cite[Chapter 3]{Kik_Oden_Thin_Obst_Penalization}, we look at a penalty functional of the form
        \begin{equation}\label{eq413}
            J_{\xi}(w) = \frac{1}{2}\mathcal{E}(w,w) - \langle f,w \rangle_{W^{-s,2}(\Omega, \Sigma_1), W^{s,2}(\Omega, \Sigma_1)} +\frac{\xi^{-1}}{2}\lVert (w- \varphi)^+\rVert_{W^{s,2}(\Sigma_2)}^2.
        \end{equation}

           From the strict convexity of $J_{\xi}$ as well as the direct method of the calculus of variations we know that $J_{\xi}$ has a unique minimizer, which we denote by $w_{\xi}.$ Furthermore, $w_{\xi}$ satisfies the optimality conditions
    \begin{equation} \label{Pen2EL}
        \mathcal{E}(w_{\xi}, v) + \frac{1}{\xi}((w_{\xi} - \varphi)^+, v)_{W^{s,2}(\Sigma_2)} = \langle f, v \rangle_{W^{-s,2}(\Omega, \Sigma_1), W^{s,2}(\Omega, \Sigma_1)}  
    \end{equation}
    for all $v \in W^{s,2}(\Omega, \Sigma_1).$
        As we will see, considering such a penalty functional gives us a method to relate our penalized problem back to the optimality system in \eqref{eq: KKT}. More precisely, we have the following result. 

\begin{theorem}\label{thm:zeta}
        Suppose that $w \in \mathcal{K}_0$ and $\lambda \in W^{-s,2}(\Sigma_2)\subset W^{-s,2}(\Omega,\Sigma_1)$ satisfy \eqref{eq: KKT}. Additionally, suppose that $w_{\xi}$ minimizes $J_{\xi}$ for a given $\xi >0$.  Let $\lambda_\xi$ be the unique element in $W^{-s,2}(\Sigma_2)$ satisfying (by the Riesz representation theorem)
        \begin{equation}\label{mm-e}
            \langle \lambda_\xi,v\rangle_{W^{-s,2}(\Sigma_2),W^{s,2}(\Sigma_2)}=\left(\frac{1}{\xi}(w_{\xi} - \varphi)^+,v\right)_{W^{s,2}(\Sigma_2)}\;\;\mbox{ for all } v\in W^{s,2}(\Sigma_2).
        \end{equation}
        Then, there is a constant $C=C(N,s,\Omega)>0$ such that 
        \begin{equation}\label{eq414}
            \lVert w_{\xi} - w\rVert_{W^{s,2}(\Omega,\Sigma_1)} \leq C\xi\lVert \lambda \rVert_{W^{-s,2}(\Sigma_2)}
        \end{equation}
     and
     \begin{equation}\label{eq415}
            \lVert \lambda_{\xi} - \lambda\rVert_{W^{-s,2}(\Sigma_2)} \leq C\xi\lVert \lambda \rVert_{W^{-s,2}(\Sigma_2)}.
        \end{equation}
        In particular, we have that $w_{\xi} \to w$ in $W^{s,2}(\Om,\Sigma_1)$ and $\lambda_{\xi} \to  \lambda$ in $W^{-s,2}(\Sigma_2)$, as $\xi\downarrow 0$. 

        In addition,  there is a constant $C>0$ independent of $\xi$ such that
   \begin{equation}\label{eq-xi}
       \|(w_{\xi}-\varphi)^+\|_{W^{s,2}(\Sigma_2)}\le C\xi.
   \end{equation}
         \end{theorem}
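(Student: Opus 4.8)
The first move is to put the penalized optimality condition \eqref{Pen2EL} into the same shape as the first line of the KKT system \eqref{eq: KKT}. By the Riesz representation theorem the functional $v\mapsto \xi^{-1}\bigl((w_\xi-\varphi)^+,v\bigr)_{W^{s,2}(\Sigma_2)}$ is exactly $\langle\lambda_\xi,\cdot\rangle$, with the Riesz map an isometry, so $\|\lambda_\xi\|_{W^{-s,2}(\Sigma_2)}=\xi^{-1}\|(w_\xi-\varphi)^+\|_{W^{s,2}(\Sigma_2)}$ and $\mathcal E(w_\xi,v)+\langle\lambda_\xi,v\rangle=\langle f,v\rangle$ for all $v\in W^{s,2}(\Omega,\Sigma_1)$. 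Subtracting this from the first line of \eqref{eq: KKT} gives $\mathcal E(w-w_\xi,v)=\langle\lambda_\xi-\lambda,v\rangle$ for all $v\in W^{s,2}(\Omega,\Sigma_1)$; testing with $v=w-w_\xi$ and using the coercivity of $\mathcal E$ (Proposition~\ref{prop:Gag}) yields the master identity $\tfrac{C_{N,s}}{2}\|w-w_\xi\|_{W^{s,2}(\Omega,\Sigma_1)}^2=\langle\lambda_\xi-\lambda,w-w_\xi\rangle$. I would also record once and for all that, on $W^{-s,2}(\Sigma_2)$, the norm $\|\cdot\|_{W^{-s,2}(\Omega,\Sigma_1)}$ is equivalent to $\|\cdot\|_{W^{-s,2}(\Sigma_2)}$ (one inequality from the continuity of the restriction $W^{s,2}(\Omega,\Sigma_1)\to W^{s,2}(\Sigma_2)$, the other from a bounded extension in the opposite direction), so that the subtracted equation also reads $\|\lambda_\xi-\lambda\|_{W^{-s,2}(\Sigma_2)}\le C\|w-w_\xi\|_{W^{s,2}(\Omega,\Sigma_1)}$ and conversely.

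\textbf{Paragraph 2 (estimating the right-hand side; \eqref{eq-xi}).} Write $\langle\lambda_\xi-\lambda,w-w_\xi\rangle=\langle\lambda_\xi,w-w_\xi\rangle-\langle\lambda,w-w_\xi\rangle$. The $\lambda$-term is handled by the KKT structure: since $\langle\lambda,w-\varphi\rangle=0$ (complementarity) one gets $-\langle\lambda,w-w_\xi\rangle=\langle\lambda,w_\xi-\varphi\rangle$, and since $\lambda\ge 0$ and $(w_\xi-\varphi)^-\le 0$ this is $\le\langle\lambda,(w_\xi-\varphi)^+\rangle\le\|\lambda\|_{W^{-s,2}(\Sigma_2)}\,\|(w_\xi-\varphi)^+\|_{W^{s,2}(\Sigma_2)}$. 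For the $\lambda_\xi$-term I would use $\langle\lambda_\xi,w-w_\xi\rangle=\xi^{-1}\bigl((w_\xi-\varphi)^+,w-w_\xi\bigr)_{W^{s,2}(\Sigma_2)}$, decompose $(w-w_\xi)|_{\Sigma_2}=(w-\varphi)-(w_\xi-\varphi)^+-(w_\xi-\varphi)^-$, and use the elementary inequality $\bigl((w_\xi-\varphi)^+,(w_\xi-\varphi)^-\bigr)_{W^{s,2}(\Sigma_2)}\ge 0$ (both $t\mapsto t^+$ and $t\mapsto t^-$ are nondecreasing, so the Gagliardo integrand is pointwise $\ge 0$) to reduce this to $-\xi^{-1}\|(w_\xi-\varphi)^+\|_{W^{s,2}(\Sigma_2)}^2+\xi^{-1}\bigl((w_\xi-\varphi)^+,w-\varphi\bigr)_{W^{s,2}(\Sigma_2)}$. (Alternatively, the convexity of $J_\xi$, together with $w_\xi$ being its minimizer and $J_\xi(w)=J(w)$ since $w\le\varphi$ in $\Sigma_2$, directly gives $\langle\lambda_\xi,w-w_\xi\rangle\le-\tfrac1{2\xi}\|(w_\xi-\varphi)^+\|_{W^{s,2}(\Sigma_2)}^2$, bypassing the cross term.) Feeding these bounds into the master identity and absorbing by Young's inequality $ab\le\tfrac{a^2}{4\xi}+\xi b^2$ should produce $\tfrac{C_{N,s}}{2}\|w-w_\xi\|_{W^{s,2}(\Omega,\Sigma_1)}^2+\tfrac1{4\xi}\|(w_\xi-\varphi)^+\|_{W^{s,2}(\Sigma_2)}^2\le C\xi\|\lambda\|_{W^{-s,2}(\Sigma_2)}^2$, which gives at once the constraint-violation bound \eqref{eq-xi}, the uniform bound $\|\lambda_\xi\|_{W^{-s,2}(\Sigma_2)}\le C\|\lambda\|$, and a first convergence estimate for $w_\xi$.

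\textbf{Paragraph 3 (the linear rate; the main obstacle).} I expect the genuinely delicate point to be upgrading the estimate of Paragraph~2 to the \emph{sharp linear} rates \eqref{eq414} and \eqref{eq415}. The difficulty is exactly the term $\xi^{-1}\bigl((w_\xi-\varphi)^+,w-\varphi\bigr)_{W^{s,2}(\Sigma_2)}$: in $W^{s,2}(\Sigma_2)$ the inner product of the nonnegative $(w_\xi-\varphi)^+$ with the nonpositive $w-\varphi$ is \emph{not} sign-definite, and a bare Cauchy--Schwarz bound, carrying the fixed $\xi$-independent factor $\|w-\varphi\|_{W^{s,2}(\Sigma_2)}$, is too lossy to survive the $\xi^{-1}$ weight and only gives $\|w-w_\xi\|_{W^{s,2}(\Omega,\Sigma_1)}=O(\xi^{1/2})$. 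My plan to gain the missing half power is to run a bootstrap: once $\lambda_\xi$ is known to be bounded and $\|\lambda_\xi-\lambda\|_{W^{-s,2}(\Sigma_2)}\lesssim\|w-w_\xi\|$, rewrite $\langle\lambda_\xi-\lambda,w-w_\xi\rangle$ entirely in terms of the Riesz representatives $\rho_\xi=\xi^{-1}(w_\xi-\varphi)^+$ of $\lambda_\xi$ and $\rho$ of $\lambda$, and invoke complementarity $\langle\lambda,w-\varphi\rangle=0$ once more so that every occurrence of $w-\varphi$ gets paired against $\lambda_\xi-\lambda$ rather than against $\lambda_\xi$; this converts the fixed factor $\|w-\varphi\|_{W^{s,2}(\Sigma_2)}$ into $\|\lambda_\xi-\lambda\|_{W^{-s,2}(\Sigma_2)}$ and should yield a closed quadratic inequality for $\|w-w_\xi\|_{W^{s,2}(\Omega,\Sigma_1)}$ with right-hand side $O(\xi^2\|\lambda\|^2)$, hence \eqref{eq414}; \eqref{eq415} then follows from the subtracted equation and the norm equivalence of Paragraph~1, and the asserted convergences $w_\xi\to w$, $\lambda_\xi\to\lambda$ are the $\xi\downarrow 0$ limits. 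The real risk is that this bootstrap stalls at $O(\xi^{1/2})$ unless the cross term is controlled with care; a natural fallback is to test the exact variational inequality \eqref{eq: VI1} with the feasible truncation $w_\xi-(w_\xi-\varphi)^+$ of $w_\xi$ (when it lies in $\mathcal K_0$), which is designed to produce the sign one needs.
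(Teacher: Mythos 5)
Your Paragraph~1 is correct and agrees with the paper's setup: subtract \eqref{Pen2EL} from the first line of \eqref{eq: KKT} to get $\mathcal E(w_\xi-w,v)=\langle\lambda-\lambda_\xi,v\rangle$, test with $v=w_\xi-w$, and record the Lipschitz estimate $\|\lambda_\xi-\lambda\|_{W^{-s,2}(\Sigma_2)}\le C\|w_\xi-w\|_{W^{s,2}(\Omega,\Sigma_1)}$. (The paper's displayed \eqref{PEQ1} has a sign typo; your derivation of the sign is the consistent one.)

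The genuine gap is in Paragraphs~2--3, and it is the one you half-sense. After the decomposition $w-w_\xi=(w-\varphi)-(w_\xi-\varphi)^+-(w_\xi-\varphi)^-$ and your correct observation that $\bigl((w_\xi-\varphi)^+,(w_\xi-\varphi)^-\bigr)_{W^{s,2}(\Sigma_2)}\ge 0$, the estimate (assuming the cross term can be dropped) reads
\[
\tfrac{C_{N,s}}{2}\|w-w_\xi\|^2\;\le\;\langle\lambda,(w_\xi-\varphi)^+\rangle-\langle\lambda_\xi,(w_\xi-\varphi)^+\rangle\;=\;\langle\lambda-\lambda_\xi,(w_\xi-\varphi)^+\rangle .
\]
You then immediately apply Cauchy--Schwarz to $\langle\lambda,(w_\xi-\varphi)^+\rangle$, which destroys this pairing and is exactly why your Paragraph~2 only yields $O(\sqrt\xi)$ and why your Paragraph~3 has to scramble for a ``bootstrap.'' The paper's mechanism, which you should adopt, is to keep the pairing intact and use \eqref{mm-e} as a Riesz isometry: since $\xi^{-1}(w_\xi-\varphi)^+$ is the Riesz representative of $\lambda_\xi$,
\[
\langle\lambda-\lambda_\xi,(w_\xi-\varphi)^+\rangle=\xi\,(\lambda-\lambda_\xi,\lambda_\xi)_{W^{-s,2}(\Sigma_2)}=-\xi\|\lambda-\lambda_\xi\|^2+\xi(\lambda-\lambda_\xi,\lambda)\le\xi\|\lambda-\lambda_\xi\|\,\|\lambda\|.
\]
Feeding in $\|\lambda-\lambda_\xi\|\le C\|w-w_\xi\|$ and cancelling one factor of $\|w-w_\xi\|$ gives \eqref{eq414} in a single pass; \eqref{eq415} then follows from the Lipschitz estimate, and \eqref{eq-xi} from testing \eqref{mm-e} with $v=(w_\xi-\varphi)^+$ together with the resulting uniform bound on $\|\lambda_\xi\|$. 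No separate constraint-violation estimate or iteration is needed; the ``bootstrap'' is simply the last cancellation, and your alternative route via $\|(w_\xi-\varphi)^+\|=O(\xi)$ followed by $\|w-w_\xi\|^2\le\|\lambda\|\,\|(w_\xi-\varphi)^+\|$ does in fact stall at $O(\sqrt\xi)$, as you feared.

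Finally, the cross term $\xi^{-1}\bigl((w_\xi-\varphi)^+,w-\varphi\bigr)_{W^{s,2}(\Sigma_2)}$ that you flag as not sign-definite is a real concern, and it is a concern for the paper's own argument as well. The paper disposes of it by asserting that $f^+=\max\{f,0\}$ satisfies the projection variational inequality \eqref{projection_VI} onto the nonnegative cone $\mathcal K_0^+\subset W^{s,2}(\Sigma_2)$ and hence that $(f^+,g^-)_{W^{s,2}(\Sigma_2)}\le 0$ and $(f^+,f^-)_{W^{s,2}(\Sigma_2)}=0$ for all $f,g$. These identities hold in $L^2$, where $\max\{f,0\}$ is indeed the metric projection, but not in $W^{s,2}(\Sigma_2)$: your own pointwise-monotonicity argument shows $(f^+,f^-)_{W^{s,2}(\Sigma_2)}\ge 0$, with strict inequality whenever $f$ changes sign, which already contradicts $(f^-,f^+)=0$, and taking $g=f$ contradicts $(g^-,f^+)\le 0$. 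So the correct statement is that the seminorm part of the nonnegative cone is acute rather than obtuse in $W^{s,2}$, which is the opposite of what the paper's chain requires. Your skepticism here is well placed, and a completed argument would need either a valid replacement for the sign claim on the cross term or a reformulation of the penalty (e.g.\ using the true $W^{s,2}$-projection onto $\mathcal K_0^-$, or the Moreau decomposition) so that the projection VI actually applies.
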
                
        \begin{proof}
           It follows from \eqref{eq: KKT}, \eqref{Pen2EL} and \eqref{mm-e} that 
            \begin{align}\label{PEQ1}
               \mathcal{E}(w_{\xi} - w, v) = \langle \lambda_\xi - \lambda, v\rangle_{W^{-s,2}(\Sigma_2),W^{s,2}(\Sigma_2)} 
            \end{align}
            for all $v \in W^{s,2}(\Omega, \Sigma_1).$  Taking $v:=w_\xi-w$ in \eqref{PEQ1} yields
            \begin{equation}\label{PEQ1-2}
                \mathcal E(w_\xi-w,w_\xi-w)= \langle \lambda_\xi - \lambda, w_\xi-w\rangle_{W^{-s,2}(\Sigma_2),W^{s,2}(\Sigma_2)} .
            \end{equation}
            Since by \eqref{eq: KKT}
            $$\langle \lambda , w-\varphi\rangle_{W^{-s,2}(\Sigma_2),W^{s,2}(\Sigma_2)}=0,$$ 
            it follows that
            \begin{align}\label{mwj}
              \langle \lambda - \lambda_\xi, w-w_\xi\rangle_{W^{-s,2}(\Sigma_2),W^{s,2}(\Sigma_2)}=& \langle \lambda - \lambda_\xi, w-\varphi+\varphi-w_\xi\rangle_{W^{-s,2}(\Sigma_2),W^{s,2}(\Sigma_2)}\notag\\
              =&
              \langle -\lambda_\xi , w-\varphi\rangle_{W^{-s,2}(\Sigma_2),W^{s,2}(\Sigma_2)}\notag\\
              &-
              \langle \lambda - \lambda_\xi, w_\xi-\varphi\rangle_{W^{-s,2}(\Sigma_2),W^{s,2}(\Sigma_2)}.
            \end{align}
 It follows from the abstract result given in \cite[Section 3.3]{Kik_Oden_Thin_Obst_Penalization}, that for $f\in W^{s,2}(\Sigma_2)$, $f^+$ is characterized by the variational inequality 
            \begin{equation}\label{projection_VI}
                (f^+ - f, g - f^+)_{W^{s,2}(\Sigma_2)} \geq 0,
            \end{equation}
            for all $g\in\mathcal K_0^+ := \{g \in W^{s,2}(\Sigma_2): g \geq 0\}$.
            In addition, since $f^+\in\mathcal K_0^+$ we can easily deduce that 
            \begin{equation*}
            (f^-,f^+)_{W^{s,2}(\Sigma_2)}=0,\;  (g^-,f^+)_{W^{s,2}(\Sigma_2)}\le 0,\;\;  (f^+,f^+)_{W^{s,2}(\Sigma_2)}\ge 0,
            \end{equation*}
            for all $f$, $g\in W^{s,2}(\Sigma_2)$. Notice also that from the definition of $\lambda_\xi$, we have that 
            \begin{equation*}
            \langle\lambda_\xi-\lambda,\xi^{-1}(w_\xi-\varphi)^+\rangle_{W^{-s,2}(\Sigma_2),W^{s,2}(\Sigma_2)}=\left(\lambda_\xi-\lambda,\lambda_\xi\right)_{W^{-s,2}(\Sigma_2)}.
            \end{equation*}
            Using all these facts and \eqref{mm-e}, we get from \eqref{mwj} that 
            \begin{align*}
               \langle \lambda - \lambda_\xi, w-w_\xi\rangle_{W^{-s,2}(\Sigma_2),W^{s,2}(\Sigma_2)}
               =&\langle \lambda - \lambda_\xi, w-\varphi+\varphi-w_\xi\rangle_{W^{-s,2}(\Sigma_2),W^{s,2}(\Sigma_2)}\\
               =&-\langle \lambda - \lambda_\xi, w_\xi-\varphi\rangle_{W^{-s,2}(\Sigma_2),W^{s,2}(\Sigma_2)}\\
               &+\langle \lambda - \lambda_\xi, w-\varphi\rangle_{W^{-s,2}(\Sigma_2),W^{s,2}(\Sigma_2)}\\
                =& -\langle \lambda - \lambda_\xi, w_\xi-\varphi\rangle_{W^{-s,2}(\Sigma_2),W^{s,2}(\Sigma_2)}\\
               &+\langle \lambda, w-\varphi\rangle_{W^{-s,2}(\Sigma_2),W^{s,2}(\Sigma_2)}\\
               &-\langle \lambda_\xi, w-\varphi\rangle_{W^{-s,2}(\Sigma_2),W^{s,2}(\Sigma_2)}\\
                =&-\langle \lambda - \lambda_\xi, w_\xi-\varphi\rangle_{W^{-s,2}(\Sigma_2),W^{s,2}(\Sigma_2)}\\
                &+\langle \lambda, w-\varphi\rangle_{W^{-s,2}(\Sigma_2),W^{s,2}(\Sigma_2)}\\
                &-\left(\frac{1}{\xi}(w_\xi-\varphi)^+,w-\varphi\right)_{W^{s,2}(\Sigma_2)}.
                \end{align*}
        Since, $(w-\varphi)^+=0$ in $\Sigma_2$, we have that
        \begin{align*}
        &\left(\frac{1}{\xi}(w_\xi-\varphi)^+,w-\varphi\right)_{W^{s,2}(\Sigma_2)}\\
        =& \left(\frac{1}{\xi}(w_\xi-\varphi)^+,(w-\varphi)^+\right)_{W^{s,2}(\Sigma_2)}+
        \left(\frac{1}{\xi}(w_\xi-\varphi)^+,(w-\varphi)^-\right)_{W^{s,2}(\Sigma_2)}\\
        =&\left(\frac{1}{\xi}(w_\xi-\varphi)^+,(w-\varphi)^-\right)_{W^{s,2}(\Sigma_2)}\\
        \le& 0.
        \end{align*}
        Notice also that
        $$\langle \lambda_\xi, (w_\xi-\varphi)^-\rangle_{W^{-s,2}(\Sigma_2),W^{s,2}(\Sigma_2)}=0$$
        and
        $$\langle \lambda , (w_\xi-\varphi)^-\rangle_{W^{-s,2}(\Sigma_2),W^{s,2}(\Sigma_2)}\le 0.$$
        Using all the above facts we can deduce that 
        \begin{align*}
         \langle \lambda - \lambda_\xi, w-w_\xi\rangle_{W^{-s,2}(\Sigma_2),W^{s,2}(\Sigma_2)}       
               \le & \langle \lambda_\xi - \lambda, w_\xi-\varphi\rangle_{W^{-s,2}(\Sigma_2),W^{s,2}(\Sigma_2)}\\
               \le & \langle \lambda_\xi - \lambda, (w_\xi-\varphi)^+\rangle_{W^{-s,2}(\Sigma_2),W^{s,2}(\Sigma_2)}\\
               =&-\xi  \left(\lambda_\xi-\lambda,\lambda_\xi\right)_{W^{-s,2}(\Sigma_2)}\\
               =&-\xi\left(\lambda_\xi-\lambda,\lambda_\xi-\lambda+\lambda\right)_{W^{-s,2}(\Sigma_2)}\\
               =&-\xi\|\lambda_\xi-\lambda\|_{W^{-s,2}(\Sigma_2)}^2
               -\xi\left(\lambda_\xi-\lambda,\lambda\right)_{W^{-s,2}(\Sigma_2)}\\
                \le &\xi \|\lambda-\lambda_\xi\|_{W^{-s,2}(\Sigma_2)}\|\lambda\|_{W^{-s,2}(\Sigma_2)}.
            \end{align*}
            This latter estimate together with \eqref{PEQ1-2} and the coercivity of the bilinear form $\mathcal E$ yield that there is a constant $C=C(N,s,\Omega)>0$ such that
            \begin{align}\label{ss1}
                \|w_\xi-w\|_{W^{s,2}(\Omega,\Sigma_1)}^2\le C\xi\|\lambda-\lambda_\xi\|_{W^{-s,2}(\Sigma_2)}\|\lambda\|_{W^{-s,2}(\Sigma_2)}.
            \end{align}
            On the other hand, it follows from \eqref{PEQ1} that there is a constant $C>0$ such that
            \begin{align}\label{ss2}
             \|\lambda-\lambda_\xi\|_{W^{-s,2}(\Sigma_2)}\le C\|w_\xi-w\|_{W^{s,2}(\Omega,\Sigma_1)}.
            \end{align}
           Combining \eqref{ss1}-\eqref{ss2} we obtain that 
            \begin{align*}
             \|w_\xi-w\|_{W^{s,2}(\Omega,\Sigma_1)}\le C\xi \|\lambda\|_{W^{-s,2}(\Sigma_2)}
            \end{align*}
            and we have shown \eqref{eq414}. Combining \eqref{eq414} and \eqref{ss2} we get \eqref{eq415}. 
            Finally, the last assertion follows from \eqref{ss2}. 
            
          It remains to prove \eqref{eq-xi}.  Indeed, taking $v:=(w_{\xi}-\varphi)^+\in W^{s,2}(\Sigma_2) $ as a test function in \eqref{mm-e} we obtain the following estimate:
   \begin{align*}
       \|(w_{\xi}-\varphi)^+\|_{W^{s,2}(\Sigma_2)}^2=&\xi\langle \lambda_\xi,(w_{\xi}-\varphi)^+\rangle_{W^{-s,2}(\Sigma_2), W^{s,2}(\Sigma_2)}\\
       \le & \xi\|\lambda_\xi\|_{W^{-s,2}(\Sigma_2)}\|(w_{\xi}-\varphi)^+\|_{ W^{s,2}(\Sigma_2)}.
   \end{align*}
   Since $\lambda_\xi\to\lambda$ in $W^{-s,2}(\Sigma_2)$, as $\xi\to 0$, it follows that the sequence $(\lambda_\xi)$ is bounded in $W^{-s,2}(\Sigma_2)$. This fact together with the previous estimate gives \eqref{eq-xi}. The proof is finished.
            \end{proof}

\bibliographystyle{plain}
\bibliography{refs}

\end{document}